\newcommand{\R}{\mathbb{R}}
\newcommand{\N}{\mathbb{N}}
\newcommand{\Z}{{\mathbb Z}}
\newcommand{\Q}{{\mathbb Q}}
\renewcommand{\phi}{\varphi}
\newcommand{\field}[1]{\mathbb{#1}}
\newcommand{\Period}{{\rm Per}}
\newcommand{\sig}{{\rm sign}}
\newcommand{\Tr}{{\rm Tr}}
\begin{document}
\title*{A signed version of Putnam's homology theory: Lefschetz and zeta functions}
\author{Robin J Deeley}
\institute{Robin Deeley \at Department of Mathematics, University of Hawaii, 2565 McCarthy Mall, Keller 401A, Honolulu HI 96822  \email{robin.deeley@gmail.com}}
%
%
\maketitle

\abstract{A signed version of Putnam homology for Smale spaces is introduced. Its definition, basic properties and associated Lefschetz theorem are outlined. In particular, zeta functions associated to an Axiom A diffeomorphism are compared.}

\section{Introduction}
\label{sec:1}
Let $(M, f)$ be an Axiom A diffeomorphism \cite{Sma}. Then there are two natural zeta functions associated to $(M, f)$, the dynamical zeta function and the homological zeta function, see \cite[Section I.4]{Sma}. The former is defined as follows:
\[
\zeta_{{\rm dym}}(s) := \exp \left( \sum_{n\ge 1} \frac{N_n}{n} t^n \right)
\]
where $N_n$ is the cardinality of the set of points with period $n$. The definition of latter is 
\[
\zeta_{{\rm hom}}(s) := \exp \left( \sum_{n\ge 1} \frac{\tilde{N}_n}{n} t^n \right)
\]
where $\tilde{N}_n$ is obtained by counting the points of period $n$ with ``sign" (see Example \ref{ClaLefEx} or \cite[Section I.4]{Sma} for further details).

Both these functions extend meromorphically to rational functions. For the former, this is an important theorem of Manning \cite{Man}. For the latter, it is a corollary of the Lefschetz fixed point theorem. Based on Manning's result, Bowen asked whether there exists a homology theory for basic sets of an Axiom A diffeomorphism along with an associated Lefschetz theorem that implies that the dynamical zeta function is a rational function in the same way the classical Lefschetz theorem implies that the homological zeta function is a rational function. Recently, Ian Putnam constructed such a homology theory and proved the relevant Lefschetz theorem \cite{Put} (in particular see \cite[Section 6]{Put}). For more on the relationship between zeta functions, homology, and Lefschetz theorems see \cite[Section I.4]{Sma} or \cite[Section 6.1]{Put} for brief introductions or \cite{FraBook} and references therein for more details.

Putnam's homology theory is defined using the framework of Smale spaces. Smale spaces were introduced by Ruelle \cite{Rue}. The reader who is unfamiliar with them can assume that any Smale space in the present paper is either the nonwandering set or a basic set of an Axiom A diffeomorphism. The precise definition of a Smale space is given in Section \ref{SmaSpaSec}. 

It is important to note that the dynamical zeta function of an Axiom A diffeomorphism depends only on its restriction to the nonwandering set, but this is not the case for the homological zeta function. In particular, the two zeta functions defined above are not in general equal and as such the classical homology of $M$ and Putnam's homology of the nonwandering set of $M$ are (again in general) not isomorphic. 

The modest goal of the present paper is to outline the construction of a homology theory defined in the same spirit as Putnam's homology, but whose associated Lefschetz theorem is more closely related to the classical Lefschetz theorem; it counts periodic points with ``sign", see Theorem \ref{sigLefThm} for the precise statement. This goal is achieved by considering signed Smale spaces. By definition, a signed Smale space is a Smale space along with a continuous map to $\{ -1 , 1\}$, which is called a sign function. Then, by following Putnam's constructions in \cite{Put} quite closely but with this additional sign function, one obtains a new ``signed version" of Putnam's homology. In the case of an Axiom A diffeomorphism, the signed homology theory of the nonwandering set with a particular sign function is more closely related (in particular through the associated Lefschetz theorem) to the standard homology of the manifold, at least in particular situations, see Theorem \ref{eigValThm} and Example \ref{twoDimTorAut}. The notion of signed Smale space is based on work of Bowen, see in particular \cite[Theorem 2]{Bow}.

If the Smale space is connected the only possible sign functions are constant and the signed homology is essentially the same as Putnam's homology. However, typically the nonwandering set of an Axiom A diffeomorphism is not connected and this can also occur for basic sets (e.g., shifts of finite type). 

I have assumed the reader is familiar with Putnam's monograph \cite{Put} and Bowen's paper \cite{Bow}. In particular, see \cite{Bow} for more on filtrations and the no-cycle condition. Also, the reader should be warned that there are many definitions and a number proofs are omitted. Most notable among these are Theorems \ref{indPreSUpairThm} and \ref{sigLefThm}. Although, proofs of these theorems are long, the reader familiar with the proofs in \cite{Put} will likely see how they are proved. In particular, for Theorem \ref{sigLefThm}, one follows almost verbatim the construction (which is based on Manning's proof in \cite{Man}) in \cite[Section 6]{Put}. Detailed proofs of these theorems will appear elsewhere.

\section{Main results}
\label{SmaSpaSec}

\begin{definition}
A Smale space $(X,\varphi)$ consists of a compact metric space $(X, d)$ and a homeomorphism $\varphi: X \to X$ such that there exist constants $ \epsilon_{X} > 0, 0<\lambda < 1$ and a continuous partially defined map:
\[ \{(x,y) \in X \times X \mid d(x,y) \leq \epsilon_{X}\} \mapsto [x, y] \in X \]
satisfying the following axioms:
\begin{itemize}
\item[B1] $\left[ x, x \right] = x$,
\item[B2] $\left[ x, [ y, z] \right] = [ x, z]$,
\item[B3] $\left[ [ x, y], z \right] = [ x,z ]$, and
\item[B4] $\varphi[x, y] = [ \varphi(x), \varphi(y)]$;
\end{itemize}
where in these axioms, $x$, $y$, and $z$ are in $X$ and in each axiom both sides are assumed to be well-defined. In addition, $(X,\varphi)$ is required to satisfy
\begin{itemize}
\item[C1] For $x,y \in X$ such that $[x,y]=y$, we have $d(\varphi(x),\varphi(y)) \leq \lambda d(x,y)$ and
\item[C2] For $x,y \in X$ such that $[x,y]=x$, we have $d(\varphi^{-1}(x),\varphi^{-1}(y)) \leq \lambda d(x,y)$.
\end{itemize}
\end{definition}
The map $[ \, \cdot \, , \, \cdot \, ]$ in the definition of a Smale space is called the bracket map; it is unique (provided it exists). 
\begin{example}
If $(M, f)$ is an Axiom A diffeomorphism, then the restriction of $f$ to the nonwandering set is a Smale space and likewise the restriction of $f$ to a basic set is also a Smale space. The bracket map in the definition of a Smale space is, in these cases, given by the canonical coordinates.
\end{example}
An important class of Smale spaces are the shifts of finite type. They can be defined as follows. Let $G=(G^0, G^1, i, t)$ be a directed graph; that is, $G^0$ and $G^1$ are finite sets called the set of vertices and the set of edges and each edge $e \in G^1$ is given by a directed edge from $i(e) \in G^0$ to $t(e) \in G^0$, see \cite[Definition 2.2.1]{Put} for further details. 

From $G$ a dynamical system is constructed by taking 
\[
\Sigma_G := \{ (g_j)_{j\in \Z} \mid g_j \in G^1 \hbox{ and }t(g_j)=i(g_{j+1}) \hbox{ for each }j\in \Z\}
\]
with the homeomorphism, $\sigma: \Sigma_G \rightarrow \Sigma_G$ given by left sided shift. Then, see for example \cite{Put}, $(\Sigma_G, \sigma)$ is a Smale space and one can define a shift of finite type to be any dynamical system that is conjugate to $(\Sigma_G, \sigma)$ for some graph $G$. Often we will drop the $G$ from the notation and denote a shift of finite type by $(\Sigma, \sigma)$. 

From $G$ and $k\ge 2$, one can obtain a higher block presentation by constructing another graph $G^k$ whose edges are paths in $G$ of length $k$ and whose vertices are paths in $G$ of length $k-1$; for the precise details see \cite[Definition 2.2.2]{Put}.

\subsection{Signed Smale spaces}

\begin{definition}
A {\it signed Smale space} is a Smale space $(X,\varphi)$ along with a continuous map $\Delta_X: X \rightarrow \{ -1, 1\}$. Furthermore, for $n\ge 1$, we define 
$$\Delta^{(n)}_X(x)= \prod_{i=0}^{n-1} \Delta_X(\varphi^i(x)).$$
A signed Smale space is denoted by $(X, \varphi, \Delta_X)$ and $\Delta_X$ is called the sign function; it is often denoted simply by $\Delta$.
\end{definition}
\begin{example}  \label{orientExSign}
Let $(\Omega, f|_{\Omega})$ be a basic set of an Axiom A diffeomorphism, $(M, f)$. We assume that the bundle $E^u|_{\Omega}$ can be oriented and then define $ \Delta: \Omega \rightarrow \{ -1 , 1\}$ as follows: 
$$\Delta(x)= \left\{ \begin{array}{rcl} 1 & : &  D_x(f):(E^u|_{\Omega})|_x \rightarrow (E^u|_{\Omega})|_{f(x)} \hbox{ preserves the orientation } \\ -1 & : &  D_x(f):(E^u|_{\Omega})|_x \rightarrow (E^u|_{\Omega})|_{f(x)} \hbox{ reverses the orientation.} \end{array} \right.$$
The fact that $\Omega$ is hyperbolic implies that $\Delta$ is continuous; hence $(\Omega, f|_{\Omega}, \Delta)$ is a signed Smale space.
\end{example}
A special case of Example \ref{orientExSign} occurs in both the statement and proof of \cite[Theorem 2]{Bow}. Another class of examples are hyperbolic toral automorphisms: 
\begin{example}
Let $M=\R^2/\Z^2$ and $f= A$ where $A\in M_2(\Z)$ with $\det(A)=\pm 1$ and eigenvalues $\lambda_1$ and $\lambda_2$ such that $0< |\lambda_2|< 1 < |\lambda_1|$. This diffeomorphism is globally hyperbolic and the nonwandering set is the entire manifold; that is, $\Omega=M$. 

The bundle $E^u$ is isomorphic to the trivial rank one bundle. Its fiber, for example at the origin, is the eigenspace associated to $\lambda_1$. One can then show that for any $x\in M$, $\Delta(x) = {\rm sign}(\lambda_1)$.
\end{example}
\begin{definition}
Let $(\Sigma, \sigma)$ be a shift of finite type and $\Delta_{\Sigma}: \Sigma \rightarrow \{ -1, 1\}$ be a continuous function. Then $(\Sigma, \sigma, \Delta_{\Sigma})$ is called a signed shift of finite type. 
\end{definition}

\subsection{The Signed Dimension Group}

\begin{proposition} \label{graphPresentSigned}
Let  $(\Sigma, \sigma, \Delta_{\Sigma})$ be a signed shift of finite type. Then, there exists a graph $G$ such that 
\begin{enumerate}
\item there is conjugacy $h: (\Sigma_G, \sigma) \rightarrow (\Sigma, \sigma)$;
\item for any $(g_j)_{j\in \field{Z}} \in \Sigma_{G}$, $(\Delta_{\Sigma} \circ h)((g_j)_{j\in \field{Z}})$ depends only on $g_0$.
\end{enumerate}
\end{proposition}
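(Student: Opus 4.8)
The plan is to reduce the statement to two standard features of shifts of finite type: a continuous $\{-1,1\}$-valued function on a shift of finite type is locally constant, hence a cylinder function depending on only finitely many coordinates; and passing to a sufficiently high block presentation converts any cylinder function into a function of a single coordinate. First I would use the definition of a shift of finite type to fix a graph $G_0$ together with a conjugacy $h_0 \colon (\Sigma_{G_0}, \sigma) \to (\Sigma, \sigma)$. Since $\{-1,1\}$ is discrete, the continuous map $\Delta_{\Sigma} \circ h_0$ is locally constant, so by compactness of $\Sigma_{G_0}$ there is an integer $N \ge 0$ and a function $F$, defined on the paths of length $2N+1$ in $G_0$, with $(\Delta_{\Sigma} \circ h_0)\big((x_j)_{j\in\field{Z}}\big) = F(x_{-N}\, x_{-N+1} \cdots x_N)$ for every $(x_j)_{j\in\field{Z}} \in \Sigma_{G_0}$; this uses only that two points of $\Sigma_{G_0}$ agreeing on the coordinates $-N,\dots,N$ are close once $N$ is large. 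If $N = 0$ the function already depends on $x_0$ alone and we are done with $G = G_0$ and $h = h_0$, so the substantive case is $N \ge 1$.

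Next I would take $G$ to be the higher block presentation $G_0^{\,2N+1}$, whose edges are the length-$(2N+1)$ paths in $G_0$ and whose vertices are the length-$2N$ paths, and introduce the usual higher block map $\beta \colon \Sigma_{G_0} \to \Sigma_{G}$ defined by $\beta\big((x_j)_j\big)_i := x_{i-N}\, x_{i-N+1} \cdots x_{i+N}$, chosen with exactly this indexing so that the $0$-th coordinate of $\beta(x)$ is the window $x_{-N}\cdots x_N$ on which $F$ depends. One checks in the standard way that $\beta$ is a conjugacy onto $(\Sigma_G, \sigma)$: consecutive blocks $\beta(x)_i$ and $\beta(x)_{i+1}$ overlap in the length-$2N$ path $x_{i-N+1}\cdots x_{i+N}$, which is a vertex of $G_0^{\,2N+1}$, so $\beta(x) \in \Sigma_G$; conversely any consistent bi-infinite sequence of blocks determines a bi-infinite path in $G_0$, which provides the continuous inverse; and $\beta$ visibly intertwines the shifts. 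Setting $h := h_0 \circ \beta^{-1}$ then gives the conjugacy in (1), and for $g = (g_j)_j \in \Sigma_G$, writing $x := \beta^{-1}(g)$ so that $g_0 = x_{-N}\cdots x_N$, we get $(\Delta_{\Sigma} \circ h)(g) = (\Delta_{\Sigma} \circ h_0)(x) = F(x_{-N}\cdots x_N) = F(g_0)$, which is (2).

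I do not expect a serious obstacle here; the proof is essentially bookkeeping. The one point demanding care is matching the indexing of $\beta$ to Putnam's precise conventions for $G_0^{\,k}$ in \cite[Definition 2.2.2]{Put} --- in particular which end of a length-$k$ path counts as its initial vertex and which as its terminal vertex --- so that the output really is a function of the single edge $g_0$ and not of $g_0$ together with a neighbouring edge; a symmetric choice of window ($-N,\dots,N$ rather than $0,\dots,2N$) is what makes this work out cleanly. It is also worth recording explicitly that $\beta$ is onto, i.e.\ that every point of $\Sigma_{G_0^{\,2N+1}}$ arises from a genuine bi-infinite path in $G_0$, since this is what guarantees $h$ is a conjugacy rather than merely a conjugacy onto a proper subshift.
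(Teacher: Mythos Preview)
Your proposal is correct and follows exactly the approach sketched in the paper: obtain an initial graph presentation by definition, then pass to a sufficiently high block presentation so that the (locally constant) sign function depends only on the zeroth coordinate. Your write-up simply supplies the details the paper omits, including the careful choice of indexing for the block map; nothing further is needed.
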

\begin{proof}
The first item is a possible definition of a shift of finite type. Using the fact that $\Delta_{\Sigma}$ is continuous, one can obtain the second item by taking a higher block presentation.
\end{proof}
\begin{definition} \label{deltaDefGraph}
Let $(\Sigma, \sigma, \Delta_{\Sigma})$ be a signed shift of finite type and $G$ is a graph which satisfies the conclusions of the previous theorem. Then, $G$ (and the conjugacy $h: (\Sigma_G, \sigma) \rightarrow (\Sigma, \sigma)$) is called a {\it signed presentation} of $(\Sigma, \sigma, \Delta_{\Sigma})$. We denote $\Delta_{\Sigma} \circ h$ by $\Delta_{\Sigma_G}$.

By assumption, for $(g_j)_{j\in \field{Z}} \in \Sigma_G$, $\Delta_G ((g_j)_{j \in \field{Z}})$ depends only on $g_0$. As such, the function $\Delta_G: G^1 \rightarrow \{-1 , 1\}$ defined via $\Delta_G(g):=\Delta_{\Sigma_G}( (g_j)_{j\in \field{Z}})$ (where $g_0=g$) is well-defined. Moreover, for a path $g_0 \cdots g_m$ in $G$ and $n\le m$, we define 
$$\Delta_{G, n}(g_0\cdots g_m)= \prod_{j=m-n}^{m} \Delta_G(g_j).$$
Finally, given $\Delta_G$ as above, we define $\Delta_{G^k}: G^k \rightarrow \{ -1, 1\}$ via
\[
\Delta_{G^k} (g_0g_1 \cdots g_{k-1}) = \Delta_G(g_{k-1})
\] 
where $g_0g_1\cdots g_{k-1}$ is an element in $G^k$ (i.e., a path of length $k$ in $G$). This choice is based on \cite[Theorem 3.2.3 Part 1]{Put}. 
We use $(\Sigma_G, \sigma, \Delta_G)$ to denote a signed shift of finite type with a fixed signed presentation. It is important to note that $\Delta_{\Sigma_G}$ and $\Delta_G$ are related, but not the same; their domains are different.
\end{definition}
\begin{definition} \label{gammaRegSignDimGro}
Suppose $(\Sigma, \sigma, \Delta_G)$ is a signed shift of finite type with a fixed signed presentation. Define $\gamma^{s}_{G, \Delta_{G}} : \field{Z}G^0 \rightarrow \field{Z}G^0$ as follows: for each $v\in G^0$, we let
$$ v \mapsto \sum_{e \in G^1, t(e)=v} i(e) \cdot \Delta_G(e). $$
Furthermore, define $D^s_{\Delta_G}(G)$ to be the inductive limit group: $\lim_{\rightarrow} (\field{Z}G^0, \gamma^s_{G, \Delta_G})$.
\end{definition}
\begin{example}
Let $G$ be the graph with one vertex and two edges labelled by $0$ and $1$. Then the associated shift of finite type is the full two shift, $(\Sigma_G, \sigma)$. Furthermore, let $\Delta_{G} : G \rightarrow \{ -1, 1\}$ be the continuous map 
\[
\Delta_G(g) = \left\{  \begin{array}{cc} 1 & g = 1 \\ -1 & g = 0. \end{array} \right. 
\]
Then, in this case, $D^s_{\Delta_G}(\Sigma_G) \cong \{ 0 \}$.
\end{example}
\begin{theorem} (reformulation of \cite[Theorem 2]{Bow}) \label{BowResult} \\
Suppose $(M,f)$ is an Axiom A diffeomorphism satisfying the no-cycle condition, ${\rm dim}(\Omega_s)=0$, and $q:={\rm rank}(E^u|_{\Omega_s})$. Then there exists signed shift of finite type $(\Sigma_G, \sigma, \Delta_G)$ such that
\begin{enumerate}
\item $(\Sigma_G, \sigma)$ is conjugate to $(\Omega_s, f|_{\Omega_s})$;
\item the map $\gamma^s_{G, \Delta_G}: \Z G^0 \rightarrow \Z G^0$ has the same nonzero eigenvalues as the map on homology:  $f|_{M_s}: H_q(M_s, M_{s-1}) \rightarrow H_q(M_s, M_{s-1})$;
\item the map $\gamma^s_{G, \Delta_G}: D^s_{\Delta_G}(G) \rightarrow D^s_{\Delta_G}(G)$ has the same nonzero eigenvalues as the map on homology:  $f|_{M_s}: H_q(M_s, M_{s-1}) \rightarrow H_q(M_s, M_{s-1})$;
\end{enumerate}
where $(M_s)_{s=1}^m$ is a fixed filtration associated to the basic sets, $(\Omega_s)_{s=1}^m$, of $(M,f)$; we assume it satisfies the assumptions in \cite{Bow}.
\end{theorem}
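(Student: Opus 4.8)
The plan is to derive all three items from Bowen's Theorem~2 \cite{Bow}, translated into the present language via Proposition~\ref{graphPresentSigned} and Definition~\ref{deltaDefGraph}, and to obtain item (3) from item (2) by a general inductive-limit argument. Since $(M,f)$ satisfies the no-cycle condition, a filtration $(M_s)_{s=1}^m$ adapted to the basic sets exists (Smale), with $f(M_s)\subseteq M_s$, $f(M_{s-1})\subseteq M_{s-1}$, and $\overline{M_s\setminus M_{s-1}}$ an isolating neighbourhood of $\Omega_s$; this is what makes $f|_{M_s}\colon H_q(M_s,M_{s-1})\to H_q(M_s,M_{s-1})$ meaningful. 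Because $\dim(\Omega_s)=0$, a theorem of Bowen shows $(\Omega_s,f|_{\Omega_s})$ is conjugate to a shift of finite type, and because $\Omega_s$ is zero-dimensional the bundle $E^u|_{\Omega_s}$ is orientable (in fact trivial), so the sign function $\Delta$ of Example~\ref{orientExSign} is defined and $(\Omega_s,f|_{\Omega_s},\Delta)$ is a signed Smale space conjugate to a signed shift of finite type. Applying Proposition~\ref{graphPresentSigned} and Definition~\ref{deltaDefGraph} --- passing to a higher block presentation if necessary --- yields a graph $G$ and a conjugacy $h\colon(\Sigma_G,\sigma)\to(\Omega_s,f|_{\Omega_s})$ with $\Delta\circ h$ depending only on the zeroth coordinate. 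This is the required signed presentation and establishes item (1).

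For item (2) the essential input is Bowen's chain-level computation of $H_*(M_s,M_{s-1})$. Since $\Omega_s$ is zero-dimensional with $\mathrm{rank}(E^u|_{\Omega_s})=q$, the relative homology is concentrated in degree $q$ and $H_q(M_s,M_{s-1})$ is free abelian on generators indexed by the rectangles of a Markov partition of $\Omega_s$ --- equivalently, after the higher block presentation, by the vertices $G^0$ --- each generator oriented using the fixed orientation of $E^u$. In this basis the coefficient of a rectangle $R'$ in $f_*[R]$ counts the ways $f(R)$ stretches across $R'$ in the unstable direction, each such crossing weighted $\pm 1$ according as $Df$ preserves or reverses the unstable orientation along it; by construction of $\Delta_G$ this weight is $\Delta_G(e)$ for the corresponding edge $e$, so the matrix of $f_*$ is exactly the signed incidence matrix defining $\gamma^s_{G,\Delta_G}$, up to transpose and a relabelling of indices. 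Transpose and relabelling do not change the spectrum, so $\gamma^s_{G,\Delta_G}$ on $\Z G^0$ and $f|_{M_s}$ on $H_q(M_s,M_{s-1})$ are conjugate endomorphisms, and in particular have the same nonzero eigenvalues.

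Item (3) then follows from item (2) and the elementary fact that, for an endomorphism $A$ of a finitely generated free abelian group $V$, the map induced by $A$ on $D:=\varinjlim(V,A)$ is an automorphism whose eigenvalues --- computed after tensoring with $\Q$ --- are exactly the nonzero eigenvalues of $A$; indeed, over $\Q$ one splits $V$ into the generalized $0$-eigenspace of $A$, where $A$ is nilpotent and the inductive limit vanishes, and a complementary subspace, where $A$ is invertible and the inductive limit is the identity system. Taking $V=\Z G^0$ and $A=\gamma^s_{G,\Delta_G}$ identifies the nonzero eigenvalues in items (2) and (3).

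The main obstacle is item (2), which is really Bowen's Theorem~2; the work there is not conceptual but bookkeeping --- matching the orientation conventions and the vertex-versus-edge incidence conventions of Bowen's Markov-partition argument with those of Definition~\ref{gammaRegSignDimGro}, checking that the choice $\Delta_{G^k}(g_0\cdots g_{k-1})=\Delta_G(g_{k-1})$ of Definition~\ref{deltaDefGraph} (following \cite[Theorem 3.2.3]{Put}) makes higher block presentations compatible, and verifying that the signed count specializes to Bowen's unsigned count when $\Delta\equiv 1$. A full proof would follow \cite{Bow} essentially verbatim, carrying the sign function throughout.
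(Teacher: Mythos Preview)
Your proposal is correct and follows essentially the same approach as the paper: items (1) and (2) are attributed to Bowen's Theorem~2, and item (3) is deduced from item (2) via the standard inductive-limit observation that the nonzero spectrum of $A$ on $V$ coincides with the spectrum of $A$ on $\varinjlim(V,A)$. You supply considerably more detail than the paper (which simply cites \cite{Bow} and invokes ``basic properties of inductive limits''), but the structure is identical; one minor overstatement is the claim that the two maps in item (2) are conjugate---Bowen's argument yields the same nonzero eigenvalues, which is all that is asserted or needed.
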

\begin{proof}
Theorem 2 of \cite{Bow} implies the existence of the signed shift of finite type satisfying items (1) and (2) in the statement. Basic properties of inductive limits of abelian groups imply that for any signed shift of finite type  $\gamma^s_{G, \Delta_G}: D^s_{\Delta_G}(G) \rightarrow D^s_{\Delta_G}(G)$ and $\gamma^s_{G, \Delta_G}: \Z G^0 \rightarrow \Z G^0$ have the same nonzero eigenvalues; item (3) follows from this observation.
\end{proof}


\subsection{Signed Homology}

\begin{definition} (see \cite[Definition 2.5.5]{Put}) \\
Suppose $(X, \varphi)$ and $(Y, \psi)$ are Smale spaces and $\pi : (X, \varphi) \rightarrow (Y, \psi)$ is a factor map. Then $\pi$ is s-bijective (resp. u-bijective) if, for each $x \in X$, $\pi|_{X^s(x)}$ (resp. $\pi|_{X^u(x)}$) is a bijection to $X^s(\pi(x))$ (resp. $X^u(\pi(x))$).
\end{definition}

\begin{definition} (compare with \cite[Definition 2.6.2]{Put}) \\
Suppose $(X, \varphi, \Delta_X)$ is a signed Smale space. Then a signed s/u-bijective pair is the following data:
\begin{enumerate}
\item signed Smale spaces $(Y, \psi, \Delta_Y)$ and $(Z, \zeta, \Delta_Z)$ such that $Y^s(y)$ and $Z^u(z)$ are totally disconnected for each $y \in Y$ and $z\in Z$;
\item s-bijective map $\pi_s : (Y, \psi) \rightarrow (X, \varphi)$;
\item u-bijective map $\pi_u : (Z, \zeta) \rightarrow (X, \varphi)$;
\end{enumerate}
such that $\Delta_Y= \Delta_X \circ \pi_s$ and $\Delta_Z = \Delta_X \circ \pi_u$.
\end{definition}
\begin{proposition} (compare with \cite[Theorem 2.6.3]{Put}) \\
If $(X, \varphi, \Delta_X)$ is a nonwandering signed Smale space, then it has a signed s/u-bijective.
\end{proposition}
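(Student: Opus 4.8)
The plan is to reduce immediately to Putnam's existence theorem for (unsigned) s/u-bijective pairs and then equip the resulting Smale spaces with sign functions by pulling back $\Delta_X$ along the factor maps. The point is that the definition of a signed Smale space places no compatibility requirement between the sign function and either the bracket map or the dynamics, so there is no genuine obstruction beyond Putnam's result.

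First I would apply \cite[Theorem 2.6.3]{Put} to the nonwandering Smale space $(X,\varphi)$: this yields Smale spaces $(Y,\psi)$ and $(Z,\zeta)$ with $Y^s(y)$ totally disconnected for every $y\in Y$ and $Z^u(z)$ totally disconnected for every $z\in Z$, together with an s-bijective factor map $\pi_s\colon (Y,\psi)\to(X,\varphi)$ and a u-bijective factor map $\pi_u\colon (Z,\zeta)\to(X,\varphi)$.

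Next I would set $\Delta_Y:=\Delta_X\circ\pi_s$ and $\Delta_Z:=\Delta_X\circ\pi_u$. Since $\pi_s$ and $\pi_u$ are (in particular) continuous and $\Delta_X\colon X\to\{-1,1\}$ is continuous, both $\Delta_Y$ and $\Delta_Z$ are continuous maps into $\{-1,1\}$, so $(Y,\psi,\Delta_Y)$ and $(Z,\zeta,\Delta_Z)$ are signed Smale spaces. With these choices every clause of the definition of a signed s/u-bijective pair holds: the totally disconnected conditions and the u/s-bijectivity of $\pi_u$ and $\pi_s$ are exactly the content of \cite[Theorem 2.6.3]{Put}, while the relations $\Delta_Y=\Delta_X\circ\pi_s$ and $\Delta_Z=\Delta_X\circ\pi_u$ hold by construction.

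There is essentially no obstacle here; the only thing worth checking is precisely that the sign-function axiom is purely topological (continuity into $\{-1,1\}$) and imposes nothing dynamical, so that the pulled-back functions automatically satisfy it. Any auxiliary identities one might want downstream — for instance that $\Delta^{(n)}_Y = \Delta^{(n)}_X\circ\pi_s$ — then follow at once from the intertwining $\pi_s\circ\psi=\varphi\circ\pi_s$ (and similarly for $\pi_u$).
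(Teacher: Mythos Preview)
Your proposal is correct and follows exactly the same approach as the paper: invoke Putnam's existence theorem for an unsigned s/u-bijective pair and then define $\Delta_Y:=\Delta_X\circ\pi_s$ and $\Delta_Z:=\Delta_X\circ\pi_u$. The paper's proof is just a terser version of what you wrote.
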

\begin{proof}
By \cite[Theorem 2.6.3]{Put}, $(X, \varphi)$ has an s/u-bijective pair: $(Y, \psi, \pi_s, Z, \zeta, \pi_u)$. Taking 
\[\Delta_Y:= \Delta_X \circ \pi_s \hbox{ and } \Delta_Z:= \Delta_X \circ \pi_u \]
leads to a signed s/u-bijective pair.
\end{proof}
For $L \ge 0$, $M\ge 0$, consider the Smale space (obtained via an iterated fiber product construction):

\[
\Sigma_{L, M}(\pi):= \{ (y_0, \ldots, y_L, z_0, \ldots, z_M) | \pi_s(y_i)=\pi_u(z_j) \hbox{ for each }i, j \}
\]
with $\sigma$ defined to be $\psi \times \cdots \times \psi \times \zeta \times \cdots \times \zeta$. As the notation suggests $(\Sigma_{L, M}(\pi), \sigma)$ is a shift of finte type.

Moreover, again for each $L \ge 0$, $M\ge 0$, $\Delta_{\Sigma_{L,M}(\pi)}: \Sigma_{L, M}(\pi) \rightarrow \{ -1 , 1 \}$ defined via
\[
\Delta_{\Sigma_{L,M}(\pi)}(y_0, \ldots, y_L, z_0, \ldots, z_M) =  \Delta_Y(y_0)
\]
is a continuous map. We note that $\Delta_{\Sigma_{L,M}(\pi)}(y_0, \ldots, y_L, z_0, \ldots, z_M)$ is to equal $\Delta_Y(y_i)$ for any $0\le i\le L$ and is also equal to $\Delta_{Z}(z_j)$ for any $0\le j \le M$. In particular, $\Delta_{\Sigma_{L,M}(\pi)}$ is constant on orbits of the natural action of $S_{L+1}\times S_{M+1}$. For more details on the action (which is the natural one) see \cite[Section 5.1]{Put}.

\begin{definition} 
Suppose that $\pi=(Y,\psi, \pi_s, Z, \zeta, \pi_u)$ a signed s/u-bijective pair for a signed Smale space, $(X,\varphi, \Delta)$. Then a graph $G$ is a signed presentation of $\pi$ if $G$ is a presentation of $\pi$, in the sense of Definition 2.6.8 of \cite{Put}, and $G$ is also a signed presentation, in the sense of Definition \ref{deltaDefGraph}, of $(\Sigma_{0,0}, \sigma, \Delta_{0, 0})$.
\end{definition}
\begin{proposition} (compare with \cite[Theorem 2.6.9]{Put}) \\
If $(X,\varphi, \Delta)$ is a signed Smale space and $\pi=(Y,\psi, \pi_s, Z, \zeta, \pi_u)$ is a signed s/u-bijective pair for $(X,\varphi)$, then there exists a presentation of $\pi$. Moreover, if $G$ is a signed presentation of $\pi$, then, for each $L\ge 0$ and $M\ge 0$, $G_{L,M}$ is a signed presentation of $(\Sigma_{L,M}(\pi), \sigma)$.
\end{proposition}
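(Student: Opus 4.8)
The plan is to bootstrap from Putnam's unsigned presentation theorem and then install the sign data by passing to a higher block presentation, in the manner of Proposition~\ref{graphPresentSigned}.

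For existence, \cite[Theorem 2.6.9]{Put} provides a presentation $G_0$ of $\pi$ in the sense of \cite[Definition 2.6.8]{Put}, together with a conjugacy $h_0 : (\Sigma_{G_0},\sigma) \to (\Sigma_{0,0}(\pi),\sigma)$. Since $\Delta_X$ is continuous and $\pi_s$ is a factor map, $\Delta_Y = \Delta_X\circ\pi_s$ is continuous, hence so is $\Delta_{\Sigma_{0,0}(\pi)}$ and therefore $\Delta_{\Sigma_{0,0}(\pi)}\circ h_0$. Repeating the argument of Proposition~\ref{graphPresentSigned}, there is a $k\ge 1$ such that the value of $\Delta_{\Sigma_{0,0}(\pi)}\circ h_0$ at $(g_j)_{j\in\field{Z}}$ depends only on the block $g_0 g_1\ldots g_{k-1}$; equivalently, the higher block presentation $G_0^k$ with its induced conjugacy is a signed presentation of $(\Sigma_{0,0}(\pi),\sigma,\Delta_{\Sigma_{0,0}(\pi)})$ in the sense of Definition~\ref{deltaDefGraph}. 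One also checks that replacing $G_0$ by $G_0^k$ leaves intact the property of being a presentation of $\pi$; this is the same refinement argument that allows one to pass freely to higher block presentations in \cite{Put}. Granting this, $G_0^k$ is a signed presentation of $\pi$.

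For the ``moreover'' statement, fix a signed presentation $G$ of $\pi$ and integers $L,M\ge 0$. By \cite[Theorem 2.6.9]{Put} the graph $G_{L,M}$ is a presentation of $(\Sigma_{L,M}(\pi),\sigma)$, with an attendant conjugacy $h_{L,M}$, so it remains only to check that $\Delta_{\Sigma_{L,M}(\pi)}\circ h_{L,M}$ depends only on the zeroth edge. The coordinate projection $(y_0,\ldots,y_L,z_0,\ldots,z_M)\mapsto (y_0,z_0)$ is a factor map $\Sigma_{L,M}(\pi)\to\Sigma_{0,0}(\pi)$, and by the displayed formula defining $\Delta_{\Sigma_{L,M}(\pi)}$ it intertwines $\Delta_{\Sigma_{L,M}(\pi)}$ with $\Delta_{\Sigma_{0,0}(\pi)}$, as both are equal to $\Delta_Y(y_0)$. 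In Putnam's setup this projection is covered by a natural graph morphism $\phi : G_{L,M}\to G$, so $\Delta_{\Sigma_{L,M}(\pi)}\circ h_{L,M}$ equals $\Delta_G$ precomposed with the map on bi-infinite edge sequences induced by $\phi$. Since $G$ is a signed presentation of $\pi$, $\Delta_G$ is a function of a single edge of $G$; hence $\Delta_{\Sigma_{L,M}(\pi)}\circ h_{L,M}$ depends only on $g_0\in G_{L,M}^1$, and $G_{L,M}$ is a signed presentation of $(\Sigma_{L,M}(\pi),\sigma)$, with induced edge function $\Delta_{G_{L,M}} = \Delta_G\circ\phi$. That this edge function does not depend on which of the strands $y_i$ or $z_j$ one reads off is exactly the over-determination noted after the definition of $\Delta_{\Sigma_{L,M}(\pi)}$.

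The main obstacle is bookkeeping inside Putnam's combinatorial machinery rather than anything conceptual: one needs the explicit description of the edges of $G_{L,M}$ in terms of those of $G$ both to see that a higher block refinement of a presentation of $\pi$ is again a presentation of $\pi$ and to produce the graph morphism $\phi$. Once these identifications are recorded, the statement follows from continuity of $\Delta$ together with the defining formula for $\Delta_{\Sigma_{L,M}(\pi)}$.
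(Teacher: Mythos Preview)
Your proof is correct and follows essentially the same approach as the paper: invoke Putnam's unsigned existence theorem, pass to a higher block presentation to make the sign function depend on a single edge, and then handle $G_{L,M}$ by reducing to $G$. The paper's own proof is considerably terser---it omits the second statement entirely, referring the reader to \cite[Theorem 2.6.9]{Put}---whereas you spell out the reduction via the projection $\Sigma_{L,M}(\pi)\to\Sigma_{0,0}(\pi)$ and its covering graph morphism $\phi:G_{L,M}\to G$, which is a helpful addition.
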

\begin{proof}
Work of Putnam (see \cite[Theorem 2.6.9]{Put}) implies that $\pi$ has a presentation in the sense of \cite[Definition 2.6.8]{Put}. That is, there is a graph $G$ and conjugacy $e: \Sigma_{0, 0}(\pi) \rightarrow \Sigma_G$ satisfying the conditions in \cite[Definition 2.6.8]{Put}. Moreover, since $\Delta_{\Sigma_{0,0}(\pi)}$ is continuous, by possibly taking a higher block presentation of $G$ we can ensure that this presentation leads to a signed presentation of $(\Sigma_{0,0}(\pi), \sigma, \Delta_{0,0})$. The second statement in the proposition follows as in the proof of \cite[Theorem 2.6.9]{Put} and is omitted.  
\end{proof}
\begin{definition} (compare with Definition 5.2.1 of \cite{Put}) \\
Suppose $(X,\varphi, \Delta)$ is a signed Smale space, $\pi=(Y,\psi, \pi_s, Z, \zeta, \pi_u)$ a signed s/u-bijective pair for $(X,\varphi)$, and $G$ is a presentation of $\pi$. Fix $k\ge 0$, $L \ge 0$, and $M\ge 0$ and let
\begin{enumerate}
\item $\mathcal{B}(G^k_{L, M}, S_L \times 1)$ be the subgroup of $\field{Z}G^k_{L,M}$ which is generated by elements of the following forms:
\begin{enumerate}
\item $p \in G^k_{L,M}$ with the property that $p \cdot ( (\alpha, 1)=p$ for some non-trivial transposition, $\alpha \in S_{L+1}$;
\item $p^{\prime}=q\cdot (\alpha, 1) - \sig(\alpha)q$ for some $q\in G^k_{L,M}$ and $\alpha \in S_{L+1}$;
\end{enumerate}
\item $\mathcal{Q}(G^k_{L,M}, S_L \times 1)$ be the quotient of $\field{Z}G^k_{L,M}$ by $\mathcal{B}(G^k_{L,M})$; we denote the quotient map by $Q$;
\item $\mathcal{A}(G^k_{L,M}, 1 \times S_{M+1})$ be $\{ a \in \field{Z}G^k_{L,M} \ | \ a \cdot (1, \beta)= \sig(\beta)\cdot a \hbox{ for all } \beta \in S_{M+1} \}$; it is a subgroup of $\field{Z}G^k_{L,M}$.
\end{enumerate}
\end{definition}
\begin{proposition} (see the remark between Definitions 5.2.1 and 5.2.2 in \cite{Put}) \\
Suppose $\pi=(Y,\psi, \pi_s, Z, \zeta, \pi_u)$ a signed s/u-bijective pair for a signed Smale space, $(X,\varphi, \Delta)$ and $G$ is a signed presentation of $\pi$. Then, for each $k\ge 0$, $L \ge 0$, and $M\ge 0$,
\begin{eqnarray*}
\gamma^s_{G^k_{L,M}, \Delta_{G^k_{L,M}}} (\mathcal{B}(G^k_{L, M}, S_L \times 1)) & \subseteq & \mathcal{B}(G^k_{L, M}, S_L \times 1) \\
\gamma^s_{G^k_{L,M}, \Delta_{G^k_{L,M}}} (\mathcal{A}(G^k_{L, M}, S_L \times 1)) & \subseteq & \mathcal{A}(G^k_{L, M}, S_L \times 1)
\end{eqnarray*}
where $\gamma^s_{G^k_{L,M}, \Delta_{G^k_{L,M}}}$ is defined in Definition \ref{gammaRegSignDimGro}.
\end{proposition}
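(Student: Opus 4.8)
The plan is to deduce both inclusions from a single equivariance property of $\gamma^s_{G^k_{L,M},\Delta_{G^k_{L,M}}}$ with respect to the natural action of $S_{L+1}\times S_{M+1}$, exactly as in the unsigned case in \cite{Put}; the only extra input needed here is that the sign function on $G^k_{L,M}$ does not interfere with that equivariance.

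First I would recall from \cite[Section 5.1]{Put} that the permutation action on $\Sigma_{L,M}(\pi)$ (permuting the $y$-coordinates via $S_{L+1}$ and the $z$-coordinates via $S_{M+1}$) carries over, through a signed presentation, to an action on the graph $G^k_{L,M}$ by graph automorphisms; in particular it commutes with the initial- and terminal-vertex maps. The key observation is that $\Delta_{G^k_{L,M}}$ is invariant under this action: by the definition of a signed presentation of $\pi$ together with $\Delta_{G^k}(g_0\cdots g_{k-1})=\Delta_G(g_{k-1})$ from Definition \ref{deltaDefGraph}, the value $\Delta_{G^k_{L,M}}(e)$ is read off from a single coordinate of the point of $\Sigma_{L,M}(\pi)$ corresponding to $e$, and we already noted that $\Delta_{\Sigma_{L,M}(\pi)}$ is constant on $S_{L+1}\times S_{M+1}$-orbits. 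Since $\gamma^s_{G^k_{L,M},\Delta_{G^k_{L,M}}}$ is assembled (Definition \ref{gammaRegSignDimGro}) purely from these graph-structure maps and the sign function, writing $\gamma^s:=\gamma^s_{G^k_{L,M},\Delta_{G^k_{L,M}}}$ and $\Delta:=\Delta_{G^k_{L,M}}$ and using the change of variables $e=g\cdot e'$ gives, for any standard basis element $v$ and any $g\in S_{L+1}\times S_{M+1}$,
\[
\gamma^s(g\cdot v)=\sum_{t(e)=g\cdot v} i(e)\,\Delta(e)=\sum_{t(e')=v}\big(g\cdot i(e')\big)\,\Delta(e')=g\cdot\gamma^s(v),
\]
so that $\gamma^s$ is $S_{L+1}\times S_{M+1}$-equivariant.

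Granting this, the two inclusions are formal. For $\mathcal{A}$: if $a$ satisfies $a\cdot(1,\beta)=\sig(\beta)\,a$ for all $\beta\in S_{M+1}$, then $\gamma^s(a)\cdot(1,\beta)=\gamma^s(a\cdot(1,\beta))=\sig(\beta)\,\gamma^s(a)$, whence $\gamma^s(a)\in\mathcal{A}$. For $\mathcal{B}$ I would check the two families of generators. A generator $q\cdot(\alpha,1)-\sig(\alpha)\,q$ is sent to $\gamma^s(q)\cdot(\alpha,1)-\sig(\alpha)\,\gamma^s(q)$, again of that form. A generator $p$ with $p\cdot(\alpha,1)=p$ is sent to $w:=\gamma^s(p)$, which by equivariance still satisfies $w\cdot(\alpha,1)=w$; expanding $w$ in the standard basis and grouping basis vectors into orbits of $\langle(\alpha,1)\rangle$, the $(\alpha,1)$-fixed basis vectors occurring in $w$ are themselves generators of the first kind, while each two-element orbit $\{p',\,p'\cdot(\alpha,1)\}$ contributes a multiple of $p'\cdot(\alpha,1)+p'=p'\cdot(\alpha,1)-\sig(\alpha)\,p'$ (a transposition has sign $-1$), a generator of the second kind; hence $w\in\mathcal{B}$, and $\gamma^s(\mathcal{B})\subseteq\mathcal{B}$.

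The only step that is not Putnam's unsigned argument repeated verbatim is the invariance of $\Delta_{G^k_{L,M}}$ under the permutation action, and I expect that to be the main (small) obstacle: it is exactly here that one uses the freedom, exploited in the previous proposition, to pass to a higher block presentation, so that the sign function on $\Sigma_{L,M}(\pi)$ genuinely depends on a single coordinate and is therefore $S_{L+1}\times S_{M+1}$-invariant; once that is in hand, nothing further about the signs needs separate treatment.
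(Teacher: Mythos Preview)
Your argument is correct and is precisely the approach the paper implicitly invokes: it gives no proof of this proposition beyond the reference to Putnam's remark, and the unsigned argument there goes through verbatim once one knows that $\gamma^s$ is $S_{L+1}\times S_{M+1}$-equivariant, which is exactly what your observation about the permutation-invariance of $\Delta_{G^k_{L,M}}$ supplies. Your decomposition of $w=\gamma^s(p)$ into fixed basis vectors and two-element orbit sums is the standard way to verify that such an invariant element lies in $\mathcal{B}$, so nothing is missing.
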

\begin{definition} (compare with \cite[Definition 5.2.2]{Put}) \\
Suppose $\pi=(Y,\psi, \pi_s, Z, \zeta, \pi_u)$ a signed s/u-bijective pair for a signed Smale space, $(X,\varphi, \Delta)$ and $G$ is a signed presentation of $\pi$. Using the previous proposition, we define
$$D^s_{Q,A,G^k, \Delta_{G^k}}(G^k_{L,M})= \lim_{\rightarrow} \left(Q(A(G^k_{L,M}, 1\times S_{M+1})), \gamma^s_{G^k_{L,M}, \Delta_{G^k_{L,M}}} \right)  $$ 
\end{definition}
For each $0\le i \le L$, there is a map defined at the level of graphs, $\delta^s_{i, }: G^k_{L, M} \rightarrow G^k_{L-1, M}$ obtained by removing the $i^{{\rm th}}$ entry. Likewise, for $0\le j\le M$, one has a map $\delta^s_{,j }: G^k_{L, M} \rightarrow G^k_{L, M-1}$ that is defined by removing the $L+j$-entry. As in \cite{Put}, these induce maps at the level of the abelian groups introduced in the previous definition:
\begin{proposition} (compare with \cite[Lemma 5.2.4]{Put}) \\
Suppose $\pi=(Y,\psi, \pi_s, Z, \zeta, \pi_u)$ a signed s/u-bijective pair for a signed Smale space, $(X,\varphi, \Delta)$ and $G$ is a signed presentation of $\pi$. Then, there exists $k\in \N$ such that $\delta_{i, }$ and $\delta_{, j}$ induced group homomorphisms:
\[ \delta^s_{i, }:  D^s_{Q,A,G^k, \Delta_{G^k}}(G^k_{L,M}) \rightarrow D^s_{Q,A,G^k, \Delta_{G^k}}(G^k_{L-1,M}) \]
and
\[ \delta^{s*}_{,j}: D^s_{Q,A,G^k, \Delta_{G^k}}(G^k_{L,M}) \rightarrow D^s_{Q,A,G^k, \Delta_{G^k}}(G^k_{L,M+1})\]
respectively.
\end{proposition}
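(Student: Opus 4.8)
\subsection*{Proof proposal}

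The plan is to follow the proof of \cite[Lemma 5.2.4]{Put} essentially verbatim, carrying the sign functions $\Delta_{G^k_{L,M}}$ along at every stage. There are three things to do: (i) realize the face maps $\delta_{i,}$ and $\delta_{,j}$ as morphisms of graphs (not merely as factor maps of shifts of finite type) after passing to a single sufficiently high block presentation $G^k$; (ii) check that the induced maps on $\mathbb{Z}G^k_{L,M}$ --- direct image along $\delta_{i,}$ on the $Y$-faces, and the transpose of the graph deletion map $G^k_{L,M+1}\to G^k_{L,M}$ on the $Z$-faces, exactly as in \cite{Put} --- are compatible with the signed structure maps $\gamma^s_{G^k_{L,M},\Delta_{G^k_{L,M}}}$ and send $\mathcal{B}(G^k_{L,M},S_L\times 1)$ and $\mathcal{A}(G^k_{L,M},1\times S_{M+1})$ to the corresponding subgroups at the target index; (iii) conclude that the maps descend to $Q(\mathcal{A}(\cdot))$ and pass to the inductive limits defining $D^s_{Q,A,G^k,\Delta_{G^k}}(G^k_{L,M})$.

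For step (i), the deletion maps $\Sigma_{L,M}(\pi)\to\Sigma_{L-1,M}(\pi)$ and $\Sigma_{L,M+1}(\pi)\to\Sigma_{L,M}(\pi)$ are continuous and commute with the shift, hence are factor maps of shifts of finite type; to present them as graph morphisms uniformly in $(L,M)$ one fixes a presentation $G$ of $\pi$ and passes to $G^k$ for one common $k$, just as in \cite{Put}. The only additional bookkeeping in the signed setting is to check that this high block presentation remains a signed presentation of each $(\Sigma_{L,M}(\pi),\sigma,\Delta_{\Sigma_{L,M}(\pi)})$; this is immediate from continuity of $\Delta_{\Sigma_{L,M}(\pi)}$ together with the preceding proposition (the signed analogue of \cite[Theorem 2.6.9]{Put}), so no genuinely new argument is needed here.

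Step (ii) is where the sign must be used, and the key observation is that the sign of a point of $\Sigma_{L,M}(\pi)$ does not depend on which coordinate one retains: by construction $\Delta_{\Sigma_{L,M}(\pi)}(y_0,\dots,y_L,z_0,\dots,z_M)=\Delta_Y(y_0)$, and since $\Delta_Y=\Delta_X\circ\pi_s$, $\Delta_Z=\Delta_X\circ\pi_u$ and $\pi_s(y_i)=\pi_u(z_j)$ for all $i,j$, this equals $\Delta_Y(y_i)=\Delta_Z(z_j)$ for every $i,j$. Hence under each deletion map the sign function pulls back to the sign function, so at the edge level $\Delta_{G^k_{L-1,M}}\circ\delta_{i,}=\Delta_{G^k_{L,M}}$ and likewise for the $Z$-faces. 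This identity is exactly what makes the sign-twisted maps $\gamma^s_{G^k_{L,M},\Delta_{G^k_{L,M}}}$ intertwine with the deletion maps in precisely the way the untwisted maps of \cite{Put} do; with this in hand, the verifications that the deletion maps preserve $\mathcal{B}$ and $\mathcal{A}$ are formally identical to those in \cite{Put}, since those arguments only concern the $S_{L+1}\times S_{M+1}$-action and the omission of a coordinate, neither of which is affected by the coordinate-independent sign. Step (iii) is then routine: $\gamma^s$-equivariance (using the preceding proposition on invariance of $\mathcal{B}$ and $\mathcal{A}$) together with compatibility with the deletion maps gives well-defined homomorphisms on the inductive limits.

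I expect the main obstacle to be step (i): turning the face maps into honest graph morphisms uniformly in $(L,M)$, which is what forces the single choice of $k$ and requires keeping the presentations of all the $\Sigma_{L,M}(\pi)$ compatible under deletion. This is already the technical heart of \cite[Lemma 5.2.4]{Put}; in the signed case it is accompanied only by the minor extra point, noted above, that the relevant high block presentation is simultaneously a signed presentation. Once step (i) is secured, the coordinate-independence of $\Delta$ makes steps (ii) and (iii) straightforward adaptations of \cite{Put}.
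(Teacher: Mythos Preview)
Your proposal is correct and matches the paper's approach: the paper omits this proof entirely, simply pointing to \cite[Lemma 5.2.4]{Put}, and your plan is precisely to reproduce that argument with the sign carried along, using the coordinate-independence of $\Delta_{\Sigma_{L,M}(\pi)}$ (which the paper records just before this proposition) to see that the deletion maps intertwine the signed $\gamma^s$ maps. The only content beyond Putnam's proof is exactly the observation $\Delta_{G^k_{L-1,M}}\circ\delta_{i,}=\Delta_{G^k_{L,M}}$ (and its $Z$-face analogue), which you have identified correctly.
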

\begin{definition} (compare with \cite[Definition 5.1.7]{Put} and \cite[Sections 5.2 and 5.3]{Put}) \\
Suppose $\pi=(Y,\psi, \pi_s, Z, \zeta, \pi_u)$ is a s/u-bijective pair for a signed Smale space, $(X,\varphi, \Delta)$, $G$ is a signed presentation of $\pi$, and $k$ is as in the statement of previous proposition. Then, we let
$$d^s_{Q,A,G^k, \Delta_{G^k}}(\pi)_{L,M}: D^s_{Q, A, \Delta_{G^k}}(G^k_{L,M}) \rightarrow D^s_{Q, A, \Delta_{G^k}}(G^k_{L-1, M}) \oplus D^s_{Q,A, \Delta_{G^k}}(G^k_{L, M+1})$$
be the map 
$$\sum_{i=0}^{L} (-1)^i \delta^s_{i, } + (-1)^L \sum_{j=0}^{M} (-1)^j \delta^{s*}_{,j}. $$ 
Finally, for each $N\in \mathbb{Z}$, we let $d^s_{Q,A,G^k, \Delta_{G^k}}(\pi)_N = \bigoplus_{L-M=N} d^s_{Q,A,G^k, \Delta_{G^k}}(\pi)_{L,M}$.
\end{definition}

\begin{theorem} (see \cite[Sections 5.1 and 5.2]{Put}) \\
Assuming the setup of the previous definition, 
\[
\left( \bigoplus_{L-M=N} D^s_{Q,A,\Delta_{G^k_{L,M}}}(G_{L,M}), \bigoplus_{L-M=N} d^s_{Q,A,G^k, \Delta_{G^k}}(\pi)_{L,M} \right)_{N\in \field{Z}} 
\]
is a complex.
\end{theorem}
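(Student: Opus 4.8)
The plan is to prove $d^s_{Q,A,G^k,\Delta_{G^k}}(\pi)_{N-1}\circ d^s_{Q,A,G^k,\Delta_{G^k}}(\pi)_N=0$ for every $N$ by reducing the statement to combinatorial identities among the maps $\delta^s_{i,}$ and $\delta^{s*}_{,j}$, following \cite[Sections 5.1 and 5.2]{Put} essentially verbatim. Note that $d_{L,M}$ sends $D(G^k_{L,M})$ into $D(G^k_{L-1,M})\oplus D(G^k_{L,M+1})$, both summands having index $N-1$ when $L-M=N$, so $d_N\colon C_N\to C_{N-1}$ and what must be checked is $d_{N-1}\circ d_N=0$.

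First I would establish the face-map relations at the level of the graphs $G^k_{L,M}$, where they are purely combinatorial statements about deleting entries of tuples coming from the iterated fiber product $\Sigma_{L,M}(\pi)$. Deleting one $Y$-coordinate and then another satisfies the usual simplicial identity (of the form $\delta_i\circ\delta_j=\delta_{j-1}\circ\delta_i$ for $i<j$, along $G^k_{L,M}\to G^k_{L-1,M}\to G^k_{L-2,M}$); the $Z$-coordinate maps $\delta^{s*}_{,j}$, whose induced maps on the dimension groups run in the increasing-$M$ direction because the $Z$-leg of the pair is u-bijective, satisfy the dual cosimplicial identity; and a $Y$-deletion and a $Z$-map act on disjoint blocks of coordinates, hence commute. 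Next I would check that these relations descend to the groups $D^s_{Q,A,G^k,\Delta_{G^k}}(G^k_{L,M})$: by the propositions above, for $k$ chosen as in the relevant proposition the maps $\delta^s_{i,}$ and $\delta^{s*}_{,j}$ are compatible with the subgroup $\A(G^k_{L,M},1\times S_{M+1})$, with the quotient by $\mathcal{B}(G^k_{L,M},S_L\times 1)$, and with the connecting maps $\gamma^s_{G^k_{L,M},\Delta_{G^k_{L,M}}}$, so the induced maps on the inductive limits satisfy the same simplicial, cosimplicial and commutation identities. The one place the sign function could intervene is in this compatibility with $\gamma^s$, and it does not: $\Delta_{G^k}$ depends only on the final edge of a path in $G^k$, which is untouched by deleting a coordinate, so $\Delta_{G^k}$ is simply carried along by all the face maps and plays no role in the identities.

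Finally comes the sign bookkeeping. Writing $d_{L,M}=\sum_{i=0}^{L}(-1)^i\delta^s_{i,}+(-1)^L\sum_{j=0}^{M}(-1)^j\delta^{s*}_{,j}$ and expanding $d_{L-1,M}\oplus d_{L,M+1}$ after $d_{L,M}$, the composite splits into the pure-$Y$ part $\sum(-1)^{i+i'}\delta^s_{i',}\delta^s_{i,}$, which vanishes by the simplicial identity exactly as for an ordinary simplicial chain complex; the pure-$Z$ part, which vanishes by the cosimplicial identity once one tracks the $(-1)^L$ in $d_{L,M}$ against the $(-1)^L$ appearing in $d_{L,M+1}$; and the mixed part, where a term $\delta^s_{i,}$ followed by $\delta^{s*}_{,j}$ is matched with $\delta^{s*}_{,j}$ followed by $\delta^s_{i,}$, the two carrying opposite total sign because of the $(-1)^L$ versus $(-1)^{L-1}$ prefactors, so they cancel using the commutation relation.

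The one genuinely delicate point is this sign bookkeeping in the pure-$Z$ and mixed terms — the same bookkeeping that makes the total complex of a double complex a complex. Since our $d$ is Putnam's differential with the unchanged $\Delta_{G^k}$ merely riding along, this computation is identical to the one in \cite[Sections 5.1 and 5.2]{Put}; accordingly I would carry it out by citing that argument after the remark that $\Delta_{G^k}$ is inert under the face maps.
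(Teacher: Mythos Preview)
Your proposal is correct and takes essentially the same approach as the paper: the paper gives no proof of its own here but simply refers to \cite[Sections 5.1 and 5.2]{Put}, and your argument amounts to exactly that citation together with the observation that the sign $\Delta_{G^k}$ is inert under the face maps, so Putnam's simplicial/cosimplicial sign bookkeeping goes through unchanged.
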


\begin{definition} (compare with \cite[Definition 5.1.11]{Put}) \\
Suppose $\pi=(Y,\psi, \pi_s, Z, \zeta, \pi_u)$ is a s/u-bijective pair for a signed Smale space, $(X,\varphi, \Delta)$ and $G$ is a signed presentation of $\pi$. We define $H^s_*(X,\varphi, \Delta, \pi, G^k)$ to be the homology of the complex
$$\left( \bigoplus_{L-M=N} D^s_{Q,A,\Delta_{G^k_{L,M}}}(G_{L,M}), \bigoplus_{L-M=N} d^s_{Q,A,G^k, \Delta_{G^k}}(\pi)_{L,M} \right)_{N\in \field{Z}} $$
from the previous theorem. We call this the signed homology and denote it by $H^s_*(X, \varphi, \Delta, \pi, G^k)$; it is a $\Z$-graded abelian group.
\end{definition}

\begin{theorem} (compare with \cite[Theorem 5.1.12]{Put}) \\
The signed homology groups have finite rank and vanish for all but finitely many $N\in \Z$. 
\end{theorem}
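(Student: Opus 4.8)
The plan is to follow the proof of \cite[Theorem 5.1.12]{Put} nearly verbatim; the point is that the sign function $\Delta$ plays no role in the two finiteness arguments that drive it. The first ingredient is that the building blocks $D^s_{Q,A,\Delta_{G^k_{L,M}}}(G_{L,M})$ vanish once $L$ or $M$ is large. Since $\pi_s$ is s-bijective and $\pi_u$ is u-bijective, there is a constant $K$ bounding the cardinality of every fibre of $\pi_s$ and of $\pi_u$ (a uniform bound on fibres of such maps, depending only on the underlying Smale spaces and not on $\Delta$, as in \cite{Put}). If $(y_0,\dots,y_L,z_0,\dots,z_M)\in\Sigma_{L,M}(\pi)$ then $y_0,\dots,y_L$ all lie in the single fibre $\pi_s^{-1}(x)$, where $x=\pi_s(y_0)$, and $z_0,\dots,z_M$ all lie in $\pi_u^{-1}(x)$; hence for $L+1>K$ two of the $y_i$ coincide and for $M+1>K$ two of the $z_j$ coincide. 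An equality $y_i=y_{i'}$ is preserved along the entire $\sigma$-orbit (apply $\psi$ and $\psi^{-1}$), so the transposition $(i\,i')\in S_{L+1}$ fixes the corresponding bi-infinite path in $G_{L,M}$ and therefore fixes every edge occurring in it; as every edge of $G^k_{L,M}$ occurs in some bi-infinite path, it follows that for $L+1>K$ every generator $p$ of $\Z G^k_{L,M}$ satisfies $p\cdot(\alpha,1)=p$ for a transposition $\alpha$, whence $\mathcal{B}(G^k_{L,M},S_L\times1)=\Z G^k_{L,M}$ and $\mathcal{Q}(G^k_{L,M},S_L\times1)=0$. Similarly, for $M+1>K$ every $S_{M+1}$-orbit of basis elements contains an element fixed by some transposition $\beta$, and since a nonzero element $a$ of the free abelian group $\Z G^k_{L,M}$ supported on such an orbit cannot satisfy $a\cdot(1,\beta)=\sig(\beta)a=-a$, we get $\mathcal{A}(G^k_{L,M},1\times S_{M+1})=0$. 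In either case $Q\big(\mathcal{A}(G^k_{L,M},1\times S_{M+1})\big)=0$, so $D^s_{Q,A,\Delta_{G^k_{L,M}}}(G_{L,M})=0$; note that $\Delta_{G^k_{L,M}}$ entered neither computation.

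The second ingredient is that each building block has finite rank. The group $D^s_{Q,A,\Delta_{G^k_{L,M}}}(G_{L,M})$ is the inductive limit of a stationary system whose underlying group $Q(\mathcal{A}(G^k_{L,M},1\times S_{M+1}))$ is a subquotient of the finitely generated free abelian group $\Z G^k_{L,M}$ and whose single connecting endomorphism is $\gamma^s_{G^k_{L,M},\Delta_{G^k_{L,M}}}$. Tensoring with $\Q$ commutes with filtered colimits, and the inductive limit over $\Q$ obtained by iterating a single endomorphism of a finite-dimensional vector space is the Fitting summand on which that endomorphism is invertible; hence $\mathrm{rank}\,D^s_{Q,A,\Delta_{G^k_{L,M}}}(G_{L,M})\le\#G^k_{L,M}<\infty$.

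To conclude, fix $N\in\Z$. By the first ingredient the chain group $\bigoplus_{L-M=N}D^s_{Q,A,\Delta_{G^k_{L,M}}}(G_{L,M})$ reduces to the sum over the finitely many pairs $(L,M)$ with $0\le L,M\le K-1$ and $L-M=N$, so by the second ingredient it has finite rank, and it is zero unless $-(K-1)\le N\le K-1$. The signed homology groups are subquotients of these chain groups, hence have finite rank and vanish for all but finitely many $N$. I expect the only real work — all of it already carried out in \cite{Put} in the unsigned case — to be the two bookkeeping points flagged above: establishing the uniform fibre bound $K$, and verifying that the $S_{L+1}\times S_{M+1}$-action on the higher block graphs $G^k_{L,M}$ transmits coincidences of coordinates faithfully to the level of edges (for which one uses that such a coincidence is $\sigma$-invariant).
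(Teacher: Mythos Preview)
Your proposal is correct and follows essentially the same approach as the paper, which simply cites \cite[pp.~131--132]{Put} for both finiteness claims; you have faithfully unpacked that argument (uniform fibre bound $K$, coincidence of coordinates forcing $\mathcal{Q}=0$ or $\mathcal{A}=0$, and finite rank of stationary inductive limits), correctly noting that the sign function $\Delta$ plays no role in either step. The only minor caveat is your use of ``every edge of $G^k_{L,M}$ occurs in some bi-infinite path,'' which requires the presenting graph to be essential --- a standard assumption in Putnam's framework --- but even without it the non-essential edges are annihilated in the inductive limit, so the conclusion is unaffected.
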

\begin{proof}
Basic properties of inductive limits imply that the signed dimension groups have finite rank. Hence the homology is finite rank (see for example page 131 of \cite{Put} for further details). That the homology vanishes for all but finitely many $N$ also follows as in \cite{Put} pages 131-132.
\end{proof}
\begin{theorem} (compare with \cite[Theorem 5.5.1]{Put}) \label{indPreSUpairThm} \\
The signed homology is independent of the choice of signed presentation, and the choice of s/u-bijective pair.
\end{theorem}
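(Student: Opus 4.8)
The plan is to follow the proof of \cite[Theorem 5.5.1]{Put} almost verbatim, carrying the sign function through every construction; there are two assertions — independence of the signed presentation $G$ for a fixed signed s/u-bijective pair $\pi$, and independence of $\pi$ — and I would handle them in turn. For the first, I would show that the signed dimension groups $D^s_{Q,A,G^k, \Delta_{G^k}}(G^k_{L,M})$, together with the face maps $\delta^s_{i, }$ and $\delta^{s*}_{,j}$, depend only on the signed shift of finite type $(\Sigma_{L,M}(\pi),\sigma,\Delta_{\Sigma_{L,M}(\pi)})$ and its $S_{L+1}\times S_{M+1}$-action, i.e., are invariants in the signed category. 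Putnam's argument that the unsigned dimension group is a conjugacy invariant (see \cite[Chapter 3]{Put}) proceeds through in-splittings, out-splittings, and higher block presentations; each such move is compatible with the sign function, because by Proposition \ref{graphPresentSigned} and Definition \ref{deltaDefGraph} the sign function on a higher block presentation is determined by a single edge, and a conjugacy between signed presentations intertwines the functions $\Delta_G$ and $\Delta_{G'}$. Hence two signed presentations of $\pi$ produce canonically isomorphic signed double complexes (the dependence on the auxiliary integer $k$ being removed by passing to a common higher block refinement), and independence of $G$ follows.

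For independence of the s/u-bijective pair, given signed s/u-bijective pairs $\pi$ and $\pi'$ for $(X,\varphi,\Delta)$, I would reproduce Putnam's comparison construction from \cite[Section 5.5]{Put}: from suitable fiber products of $Y,Y',Z,Z'$ over $X$ one builds a third s/u-bijective pair together with s-bijective and u-bijective factor maps relating it to $\pi$ and $\pi'$, and then shows the resulting chain maps between the double complexes are chain homotopy equivalences. In the signed setting every space appearing in this construction carries the sign function obtained by pulling $\Delta$ back along its (s- or u-bijective) map to $(X,\varphi)$; this is automatically consistent, since every structure map lies over $(X,\varphi)$ and the defining identities $\Delta_Y = \Delta_X \circ \pi_s$, $\Delta_Z = \Delta_X \circ \pi_u$ say precisely that the sign functions on the constituent spaces are the pullbacks of $\Delta_X$. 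Consequently Putnam's chain maps and chain homotopies carry over, once one checks they are compatible with the sign twist, and they induce isomorphisms on signed homology as before.

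The main obstacle is bookkeeping, which is exactly why the detailed proof is deferred: at each stage one must verify that the twisted maps $\gamma^s_{G^k_{L,M}, \Delta_{G^k_{L,M}}}$ still restrict to $\mathcal{B}(G^k_{L,M}, S_L \times 1)$, descend to the quotient $\mathcal{Q}$, preserve $\mathcal{A}(G^k_{L,M}, 1\times S_{M+1})$, and commute with the face maps $\delta^s_{i, }$ and $\delta^{s*}_{,j}$, so that Putnam's chain homotopies remain chain homotopies after twisting. These compatibilities hold because $\Delta$ is locally constant and, as noted after the construction of $\Delta_{\Sigma_{L,M}(\pi)}$, constant on $S_{L+1}\times S_{M+1}$-orbits — so the sign twist factors through both the quotient and the symmetrization operations — but confirming each one is precisely the long computation announced to appear elsewhere.
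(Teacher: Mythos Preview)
Your proposal is correct and matches the paper's approach: the paper gives no proof of this theorem at all, explicitly omitting it and stating that one follows Putnam's proof of \cite[Theorem 5.5.1]{Put} with the sign function carried along, with details to appear elsewhere. Your outline is in fact more detailed than anything the paper provides, and the key observations you isolate (that the pulled-back sign functions are automatically compatible under the fiber-product comparison, and that $\Delta_{\Sigma_{L,M}(\pi)}$ is constant on $S_{L+1}\times S_{M+1}$-orbits) are exactly the points needed to make Putnam's argument go through in the signed setting.
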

\begin{definition}
Suppose $(X, \varphi, \Delta)$ is a signed Smale space. Based on the previous theorem, for any choice of signed s/u-bijective pair, $\pi$, signed presentation $G$, and $k$ large enough, we can define $H^s_N(X, \varphi, \Delta):=H^s_N(X,\varphi, \Delta, \pi, G^k)$.
\end{definition}
\begin{proposition} (compare with a special case of \cite[Theorem 5.4.1]{Put}) \\
Suppose $(X, \varphi, \Delta)$ is a signed Smale space. The homeomorphism $\varphi$ and its inverse induces graded group homomorphism at the level of the signed homology groups. We denote the induced maps by $\varphi^s$ and $(\varphi^{-1})^s$ respectively.
\end{proposition}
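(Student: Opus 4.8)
The plan is to realize the induced map first at the level of the signed complex, for a fixed choice of data, and then to invoke Theorem \ref{indPreSUpairThm} to see that the resulting homology map does not depend on that choice. Fix a signed s/u-bijective pair $\pi=(Y,\psi,\pi_s,Z,\zeta,\pi_u)$ for $(X,\varphi,\Delta)$, a signed presentation $G$, and $k$ as in the construction of the signed complex. The point of departure is that $\varphi$ is itself a conjugacy $(X,\varphi)\to(X,\varphi)$, hence simultaneously s-bijective and u-bijective, and that it intertwines the pair: $\varphi\circ\pi_s=\pi_s\circ\psi$ and $\varphi\circ\pi_u=\pi_u\circ\zeta$. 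Note that $\varphi$ need not satisfy $\Delta_X\circ\varphi=\Delta_X$, so $\varphi$ is not a morphism of signed Smale spaces; the sign data is nevertheless recorded inside the bonding maps $\gamma^s$, and this is what makes the construction go through without such a hypothesis.

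Consequently, for each $L,M\ge 0$ the coordinatewise dynamics $\sigma=\psi\times\cdots\times\psi\times\zeta\times\cdots\times\zeta$ on $\Sigma_{L,M}(\pi)$ is a conjugacy commuting with the $S_{L+1}\times S_{M+1}$-action. Exactly as in the unsigned theory (compare Putnam's description of the shift acting on the dimension group of a shift of finite type in \cite{Put}), I would show that the automorphism $\sigma$ induces on the signed dimension group of $G^k_{L,M}$ is the one induced by the sign-twisted bonding map $\gamma^s_{G^k_{L,M},\Delta_{G^k_{L,M}}}$, which is invertible on the inductive limit precisely because it is a bonding map. By the proposition guaranteeing that $\gamma^s_{G^k_{L,M},\Delta_{G^k_{L,M}}}$ preserves $\mathcal{A}(G^k_{L,M},1\times S_{M+1})$ and $\mathcal{B}(G^k_{L,M},S_L\times 1)$, this automorphism descends to $D^s_{Q,A,\Delta_{G^k_{L,M}}}(G_{L,M})$. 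Since deleting a coordinate commutes with applying $\psi$ or $\zeta$ coordinatewise, it commutes with every face map $\delta^s_{i,}$ and $\delta^{s*}_{,j}$, hence with the differential $d^s_{Q,A,G^k,\Delta_{G^k}}(\pi)_N$. Thus $\sigma$ induces a chain automorphism of the signed complex and therefore a graded group automorphism of $H^s_*(X,\varphi,\Delta,\pi,G^k)$; applying the same reasoning to $\varphi^{-1}$ (equivalently, inverting $\gamma^s$ on the inductive limit) produces its inverse.

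It remains to pass to $H^s_*(X,\varphi,\Delta)$. By Theorem \ref{indPreSUpairThm} the groups attached to different choices of $(\pi,G,k)$ are linked by canonical isomorphisms, and one must check that these isomorphisms intertwine the chain automorphisms just built. This is automatic, since every map occurring in the proof of Theorem \ref{indPreSUpairThm} — the comparison maps between signed presentations and between signed s/u-bijective pairs, together with the chain homotopies showing they are quasi-isomorphisms — is assembled from the combinatorics of graphs and iterated fibre products, all of which commute with the coordinatewise dynamics. Granting this, $\varphi^s$ and $(\varphi^{-1})^s$ are well-defined graded group homomorphisms, and in fact mutually inverse automorphisms.

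The main obstacle is the identification carried out in the second paragraph: verifying that, with the sign factors $\Delta_{G^k}$ present, the dynamics $\sigma$ acts on the signed dimension group exactly as the twisted bonding map $\gamma^s_{G^k_{L,M},\Delta_{G^k_{L,M}}}$, and that this is compatible with both the antisymmetrization subgroup $\mathcal{A}$ and the quotient $\mathcal{Q}$. This is where one uses Definition \ref{deltaDefGraph} — so that $\Delta_{G^k}$ is consistent with higher block presentations and with the passage between the $\Sigma_{L,M}(\pi)$ — to confirm that the sign bookkeeping is transported correctly by $\sigma$. Everything else is a routine transcription of Putnam's functoriality argument with the sign function carried along throughout.
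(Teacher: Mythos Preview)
The paper does not actually supply a proof of this proposition; it simply states it with the parenthetical ``compare with a special case of \cite[Theorem 5.4.1]{Put}'' and moves on (recall the introduction warns that several proofs are omitted). Your proposal is therefore not competing with a written argument but rather filling in what that citation amounts to, and you do this correctly: you realize $(\varphi^{-1})^s$ concretely as the automorphism of each $D^s_{Q,A,\Delta_{G^k_{L,M}}}(G_{L,M})$ induced by the bonding map $\gamma^s_{G^k_{L,M},\Delta_{G^k_{L,M}}}$, check compatibility with $\mathcal{A}$, $\mathcal{Q}$ and the face maps, obtain a chain automorphism, and then invoke Theorem~\ref{indPreSUpairThm} for independence of choices. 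This is exactly the transcription of Putnam's argument with the sign carried through, which is what the paper intends.

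Two remarks. First, your observation that $\varphi$ need not satisfy $\Delta_X\circ\varphi=\Delta_X$, so that $\varphi$ is not a morphism of \emph{signed} Smale spaces in any naive sense, is a genuine point and worth making explicit; the resolution you give --- that the sign is absorbed into the bonding maps rather than demanded of the functoriality --- is the right one and is implicit in the paper's use of $\gamma^s_{G,\Delta_G}$ as the automorphism in Theorem~\ref{BowResult} and Theorem~\ref{sigLefThm}. Second, the ``main obstacle'' you flag (that the shift $\sigma$ really does act on the signed dimension group as the signed bonding map) is indeed the only place where the sign requires care, and Definition~\ref{deltaDefGraph}, in particular the convention $\Delta_{G^k}(g_0\cdots g_{k-1})=\Delta_G(g_{k-1})$, is precisely what is needed to make the higher-block and $G_{L,M}$ bookkeeping consistent; you are right to isolate it.
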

\begin{remark}
General functorial properties Putnam's homology theory are nontrivial, see \cite{DKW, DKWcor, Put}. The functorial properties of the signed version are further complicated by the requirement that the map at the level of Smale space must respect the signed structure. The full details of these properties are not discussed here as they are not needed for the signed Lefschetz theorem.
\end{remark}
\begin{example}
 Suppose $(X, \varphi)$ is a Smale space and we take $\Delta_X$ to be the constant function one. Then, it follows from the defintions involved that $H^s(X, \varphi, \Delta)$ is Putnam's stable homology theory.
\end{example}
\begin{example}
Suppose $(\Sigma_G,  \sigma, \Delta_G)$ is a signed shift of finite type. The signed homology, $H^s_N(\Sigma_G, \sigma, \Delta_G)$ is the signed dimension group when $N=0$ and is the trivial group when $N\neq 0$. 
\end{example}

\subsection{Lefschetz and zeta functions}
\begin{definition}
Suppose $(X, \varphi)$ is a Smale space. Then, for each $n \in \N$,
\[ 
\Period(X, \varphi, n) :=  \{ x \in X \mid \varphi^n(x) =x \}. 
\]
\end{definition}
\begin{definition} \label{signedDynZeta}
Suppose $(X, \varphi, \Delta)$ is a signed Smale space. Then, the signed dynamical zeta function is 
\[
\zeta_{(X, \Delta)}(z) = \exp \left( \sum_{n=1}^{\infty} \frac{N_n(X, \varphi, \Delta)}{n} z^n \right)
\]
where $N_n(X, \varphi, \Delta) = \sum_{x \in \Period(X, \varphi, n)} \Delta^{(n)}(x)$.
\end{definition}
\begin{example}
If $(X, \varphi, \Delta)$ is a signed Smale space with $\Delta \equiv 1$, then the signed dynamcial zeta function is the dynamical zeta function (see the Introduction):
\[
\zeta_{{\rm dyn}}(z) = \exp \left( \sum_{n=1}^{\infty} \frac{| \Period(X, \varphi, n) |}{n} z^n \right).
\]
For more details on this case, see \cite[Section I.4]{Sma} (and also \cite[Chapter 6]{Put} and references therein).
\end{example}
\begin{example} \label{ClaLefEx}
Suppose $(M,f)$ is an Axiom A diffeomorphism, $(\Omega, f|_{\Omega})$ be the restriction of $f$ to the nonwandering set, and for each $m\in \Period(\Omega, f|_{\Omega}, 1)$,
\[
L(m):={\rm sign}( {\rm det} (I-Df(m): T_m(M) \rightarrow T_m(M))).
\]
The Lefschetz fixed point formula implies that
\[
\sum_{m\in \Period(\Omega, f|_{\Omega}, 1)} L(m) = \sum_{i=0}^{\dim(M)} (-1)^i {\rm Tr}( f_* : H_i(M;\R) \rightarrow H_i(M; \R)).
\]
Moreover, by for example \cite[Proposition 5.7]{FraBook} or \cite[Section I.4]{Sma}, $L(m) = (-1)^q \Delta(m)$ where $q$ is the rank of $E^u$ at the point $m$ and $\Delta$ is as in Example \ref{orientExSign}. From this one obtains the homological zeta function discussed in the Introduction.
\end{example}
\begin{theorem} (compare with \cite[Theorem 6.1.1]{Put}) \label{sigLefThm} \\
For each $k\in \N$,
\[ \sum_{N \in \Z} (-1)^N \Tr \left( ((\varphi^{-1})^s_N \otimes id_{\Q})^n \right)  = \sum_{x \in \Period(X, \varphi, n)} \Delta^{(n)}(x) \]
where $(\varphi^{-1})^s_N \otimes id_{\Q} : H^s_N(X, \varphi, \Delta)\otimes \Q \rightarrow H^s_N(X, \varphi, \Delta)\otimes \Q$ is the map on rationalized homology induced from $\varphi^{-1}$.
\end{theorem}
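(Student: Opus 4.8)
The plan is to follow Putnam's proof of \cite[Theorem 6.1.1]{Put} --- itself modelled on Manning's argument in \cite{Man} --- almost verbatim, the only new ingredient being a scalar sign that rides along through the computation. Fix a signed s/u-bijective pair $\pi=(Y,\psi,\pi_s,Z,\zeta,\pi_u)$ for $(X,\varphi,\Delta)$, a signed presentation $G$ of $\pi$, and $k$ large enough that the groups and maps occurring in the definitions above are available; by Theorem \ref{indPreSUpairThm} these choices affect neither the signed homology nor the left hand side. The first step is the Hopf trace formula (Euler--Poincar\'e principle): after tensoring with $\Q$, the signed homology is the homology of a $\Z$-graded complex whose degree-$N$ term is $\bigoplus_{L-M=N} D^s_{Q,A,\Delta_{G^k_{L,M}}}(G^k_{L,M})\otimes\Q$, each term has finite rank, all but finitely many of the groups $D^s_{Q,A,\Delta_{G^k_{L,M}}}(G^k_{L,M})$ vanish, and $(\varphi^{-1})^s$ is induced by an endomorphism of this complex; hence
\[
\sum_{N\in\Z}(-1)^N\Tr\big(((\varphi^{-1})^s_N\otimes id_{\Q})^n\big)=\sum_{L\ge0,\,M\ge0}(-1)^{L-M}\Tr\big((\Phi_{L,M})^n\big),
\]
where $\Phi_{L,M}$ is the endomorphism of $D^s_{Q,A,\Delta_{G^k_{L,M}}}(G^k_{L,M})\otimes\Q$ induced by $\varphi^{-1}$, equivalently by the inverse of the shift on $\Sigma_{L,M}(\pi)$.

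The second step rewrites each $\Tr\big((\Phi_{L,M})^n\big)$ as a signed count of periodic configurations. Since $D^s_{Q,A,\Delta_{G^k_{L,M}}}(G^k_{L,M})$ is the inductive limit of $(Q(A(G^k_{L,M},1\times S_{M+1})),\gamma^s_{G^k_{L,M},\Delta_{G^k_{L,M}}})$ and $n\ge1$, the trace of $(\Phi_{L,M})^n$ equals the trace of $(\gamma^s_{G^k_{L,M},\Delta_{G^k_{L,M}}})^n$ on $Q(A(G^k_{L,M},1\times S_{M+1}))\otimes\Q$, the nilpotent part of $\gamma^s$ contributing nothing after raising to the $n$-th power. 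By construction $\mathcal{Q}$ is the $\sig$-coinvariant quotient of $\Z G^k_{L,M}$ for the $S_{L+1}$-action and $\mathcal{A}$ is the $\sig$-invariant subgroup for the $S_{M+1}$-action, so over $\Q$ this trace is the average, against the sign character, of the traces of $(\gamma^s)^n$ precomposed with the action of $(\alpha,\beta)$, as $(\alpha,\beta)$ ranges over $S_{L+1}\times S_{M+1}$, computed on $\Z G^k_{L,M}\otimes\Q$. Expanding over the natural basis and assembling over $L$ and $M$, the quantity $\sum_{L\ge0,\,M\ge0}(-1)^{L-M}\Tr\big((\Phi_{L,M})^n\big)$ becomes a signed sum over configurations $c=(y_0,\dots,y_L,z_0,\dots,z_M)\in\Sigma_{L,M}(\pi)$ that are fixed by the relevant $n$-fold iterate twisted by some $(\alpha,\beta)\in S_{L+1}\times S_{M+1}$, each contribution carrying the weight $(-1)^{L-M}\sig(\alpha)\sig(\beta)$, the normalising factor from the averaging, and the iterated sign $\Delta^{(n)}_{\Sigma_{L,M}(\pi)}(c)$. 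Every such $c$ projects under $\pi_s$ and $\pi_u$ to a single point $x\in\Period(X,\varphi,n)$, and since $\Delta_Y=\Delta_X\circ\pi_s$, $\Delta_Z=\Delta_X\circ\pi_u$, $\Delta_{\Sigma_{L,M}(\pi)}(c)=\Delta_Y(y_0)$, and the presentation was chosen so that $\Delta_{G^k_{L,M}}$ is compatible with $\Delta_{\Sigma_{L,M}(\pi)}$, one has $\Delta^{(n)}_{\Sigma_{L,M}(\pi)}(c)=\Delta^{(n)}_X(x)$; in particular this weight depends only on $x$ and not on $c$.

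The third step is the combinatorial collapse, carried out exactly as in \cite[Section 6]{Put}. Grouping the configurations $c$ by the point $x\in\Period(X,\varphi,n)$ they project to, the common factor $\Delta^{(n)}_X(x)$ pulls out, and what remains --- the sum over $L$, $M$, $(\alpha,\beta)$ and configurations over a fixed $x$ --- is precisely the unsigned expression that Putnam shows collapses, via the inclusion--exclusion identity for the iterated fibre products $\Sigma_{L,M}(\pi)$ and the vanishing forced by the first type of generator of $\mathcal{B}$, to the single value $1$. Summing over $x$ gives $\sum_{x\in\Period(X,\varphi,n)}\Delta^{(n)}(x)$; as the argument also shows the left hand side is independent of the chosen (sufficiently large) $k$, the identity holds for each such $k$.

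I expect the main obstacle to be the reduction carried out in the second step: making precise that the endomorphism of the complex is genuinely the one induced by $\varphi^{-1}$, that raising to the $n$-th power annihilates the nilpotent part of every bonding map that appears, and --- most delicately --- that the sign bookkeeping is consistent across all the graphs $G^k_{L,M}$, so that the scalar extracted in the collapse is exactly $\Delta^{(n)}_X(x)$ and not some twist of it; this is where the definitions fixing $\Delta_{G^k}$ and $\Delta_{G^k_{L,M}}$ earn their keep. Once these points are settled, the combinatorial heart of the argument is Putnam's, and goes through unchanged.
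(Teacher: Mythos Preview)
Your proposal is correct and follows exactly the approach the paper indicates: the paper omits the proof of Theorem~\ref{sigLefThm} entirely, stating only that one follows \cite[Section~6]{Put} (itself based on Manning's argument) almost verbatim with the sign carried through, and this is precisely what you have sketched. In fact your outline---Hopf trace formula, reduction to traces of $(\gamma^s)^n$ on the $Q/A$ subquotients via the sign-character averaging over $S_{L+1}\times S_{M+1}$, extraction of the common factor $\Delta^{(n)}_X(x)$ using $\Delta_Y=\Delta_X\circ\pi_s$, and then Putnam's combinatorial collapse to $1$---is more detailed than anything the paper provides.
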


\begin{definition}
Suppose $(M, f)$ is an Axiom A diffeomorphism satisfying the no-cycle condition, $( \Omega_s)_{s=1}^m$ are the basic sets of $(M,f)$, and $(M_s)_{s=1}^m$ is a filtration associated to the basic sets that satisfies the assumptions in \cite{Bow}. Then we let $f_{even}$ and $f_{odd}$ denotes the map induced by $f$ on 
\[
\bigoplus_{n  \: {\rm even}} H_n(M_s, M_{s-1}) \hbox{ and }\bigoplus_{n  \: {\rm odd}}H_n(M_s, M_{s-1}) 
\]
respectively.

Likewise if $(X, \varphi)$ is a Smale space, we let $\varphi^{-1}_{even}$ and $\varphi^{-1}_{odd}$ denote the map induced by $\varphi^{-1}$ on 
\[
\bigoplus_{n \: {\rm even}} H^s_n(X, \varphi, \Delta)\otimes \Q \hbox{ and }\bigoplus_{n {\rm \: odd}} H^s_n(X, \varphi, \Delta) \otimes \Q
\] 
respectively.
\end{definition}

\begin{theorem} \label{eigValThm}
Suppose $(M,f)$ is an Axiom A diffeomorphism satisfying the no-cycle condition, and $E^u|_{\Omega_s}$ is orientable. Then (using the notation introduced in the paragraph preceding this theorem) there exists a signed Smale space, $(X, \varphi, \Delta)$, such that
\begin{enumerate}
\item[(1)] $(X, \varphi)$ is conjugate to $(\Omega_s, f|_{\Omega_s})$;
\item[(2)] (even case) if $q$ is even, the maps $\varphi^{-1}_{even} \oplus f_{odd}$ and $\varphi^{-1}_{odd}\oplus f_{even}$ have the same nonzero eigenvalues or
\item[(2)] (odd case) if $q$ is odd, the maps $\varphi^{-1}_{even} \oplus f_{even}$ and $\varphi^{-1}_{odd}\oplus f_{odd}$ have the same nonzero eigenvalues. 
\end{enumerate}
\end{theorem}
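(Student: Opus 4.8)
The plan is to take for $(X,\varphi)$ the Smale space $(\Omega_s, f|_{\Omega_s})$ itself, so that item~(1) holds on the nose, equipped with the sign function $\Delta$ of Example~\ref{orientExSign}; this is available precisely because $E^u|_{\Omega_s}$ is assumed orientable (fix an orientation once and for all), and it is continuous by hyperbolicity. Writing $q = \mathrm{rank}(E^u|_{\Omega_s})$ as in Theorem~\ref{BowResult}, the argument then amounts to comparing, for every $n \ge 1$, the signed Lefschetz theorem (Theorem~\ref{sigLefThm}) applied to $(X,\varphi,\Delta)$ with the classical Lefschetz fixed point theorem applied to $f^n$ on the pair $(M_s, M_{s-1})$; matching the two and sorting terms by parity produces an equality of all trace-powers of the two operators appearing in item~(2), and that forces their nonzero eigenvalues to agree.

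First one computes the fixed point index of $f^n$ at a periodic point. For $m \in \Period(\Omega_s, f|_{\Omega_s}, n)$ the point is a hyperbolic fixed point of $f^n$, and with respect to $T_mM = E^s_m \oplus E^u_m$,
\[
\det\bigl(I - Df^n(m)\bigr) = \det\bigl(I - Df^n|_{E^s_m}\bigr)\cdot\det\bigl(I - Df^n|_{E^u_m}\bigr).
\]
The first factor is positive (a product of terms $1-\mu$ over stable eigenvalues $\mu$ with $|\mu|<1$, the complex ones occurring in conjugate pairs); writing $I - A = -A(I - A^{-1})$ for the unstable block $A = Df^n|_{E^u_m}$ shows the second factor is $(-1)^q\det(A)$ times a positive number, so its sign is $(-1)^q\Delta^{(n)}(m)$ (the orientations telescope because $f^n(m)=m$). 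Hence $\mathrm{sign}\,\det(I - Df^n(m)) = (-1)^q\Delta^{(n)}(m)$, which is the identity underlying Example~\ref{ClaLefEx}.

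Next one feeds this into the relative Lefschetz theorem. Using the no-cycle hypothesis, choose the filtration $(M_s)$ so that for every $n$ the map $f^n$ has no fixed points on the boundaries $\partial M_s$ and $\Period(\Omega_s, f|_{\Omega_s}, n) = \mathrm{Fix}(f^n)\cap(M_s\setminus M_{s-1})$ (periodic points are nonwandering, hence lie in $\bigsqcup_s \Omega_s$, and the filtration separates the basic sets; see~\cite{Bow}). Applying the Hopf trace formula to $f^n$ on $M_s$ and on $M_{s-1}$ and subtracting, the relative Lefschetz number $L(f^n; M_s, M_{s-1}) := \sum_i (-1)^i \Tr\bigl((f^n)_*\colon H_i(M_s,M_{s-1})\to H_i(M_s,M_{s-1})\bigr)$ equals the sum of indices over $\mathrm{Fix}(f^n)\cap(M_s\setminus M_{s-1})$, which by the previous step is $(-1)^q\, N_n(\Omega_s, f|_{\Omega_s}, \Delta)$. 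On the other hand Theorem~\ref{sigLefThm} applied to $(X,\varphi,\Delta)=(\Omega_s, f|_{\Omega_s}, \Delta)$ gives $N_n(\Omega_s, f|_{\Omega_s}, \Delta) = \sum_{N\in\Z}(-1)^N \Tr\bigl(((\varphi^{-1})^s_N \otimes id_{\Q})^n\bigr)$, so
\[
\sum_{N\in\Z}(-1)^N \Tr\bigl(((\varphi^{-1})^s_N \otimes id_{\Q})^n\bigr) = (-1)^q\, L(f^n; M_s, M_{s-1})\qquad\text{for all } n\ge 1.
\]

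The rest is bookkeeping. If $q$ is even, moving the $N$-even and $i$-odd contributions in the last display to one side and the $N$-odd and $i$-even contributions to the other turns it into $\Tr\bigl((\varphi^{-1}_{even}\oplus f_{odd})^n\bigr) = \Tr\bigl((\varphi^{-1}_{odd}\oplus f_{even})^n\bigr)$ for all $n\ge 1$; if $q$ is odd, the factor $(-1)^q=-1$ makes the same regrouping give $\Tr\bigl((\varphi^{-1}_{even}\oplus f_{even})^n\bigr) = \Tr\bigl((\varphi^{-1}_{odd}\oplus f_{odd})^n\bigr)$ for all $n\ge 1$. All the spaces here are finite dimensional (the signed homology groups have finite rank, and $M_s$ is a compact manifold), and for endomorphisms $A,B$ of finite dimensional vector spaces the condition $\Tr(A^n)=\Tr(B^n)$ for all $n\ge 1$ forces $A$ and $B$ to have the same nonzero eigenvalues with multiplicity (the power sums determine the multiset of nonzero eigenvalues, e.g.\ via $\sum_{n\ge1}\Tr(A^n)t^n = \sum_{\lambda\ne0}\tfrac{\lambda t}{1-\lambda t}$). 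This yields item~(2) in both cases. The step I expect to be the main obstacle is the relative Lefschetz input: arranging the filtration so that $L(f^n; M_s, M_{s-1})$ is exactly the index sum over $\Period(\Omega_s, f|_{\Omega_s}, n)$, i.e.\ controlling fixed points near $\partial M_s$ and identifying $\mathrm{Fix}(f^n)\cap(M_s\setminus M_{s-1})$ with $\Period(\Omega_s, f|_{\Omega_s}, n)$. This is where the no-cycle hypothesis and Bowen's filtrations enter; the argument is essentially classical, being the device behind the homological zeta function (cf.~\cite[Section I.4]{Sma}).
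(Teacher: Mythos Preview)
The paper does not include a proof of Theorem~\ref{eigValThm}; it is stated and immediately followed by the corollary. Your argument is correct and is precisely the route the paper's architecture points to: combine the signed Lefschetz formula (Theorem~\ref{sigLefThm}) with the classical relative Lefschetz formula for the pair $(M_s,M_{s-1})$, linked through the index identity $\mathrm{sign}\det(I-Df^n(m))=(-1)^q\Delta^{(n)}(m)$ underlying Example~\ref{ClaLefEx}. This gives, for every $n\ge 1$,
\[
\Tr\bigl((\varphi^{-1}_{even})^n\bigr)-\Tr\bigl((\varphi^{-1}_{odd})^n\bigr)=(-1)^q\Bigl(\Tr\bigl((f_{even})^n\bigr)-\Tr\bigl((f_{odd})^n\bigr)\Bigr),
\]
and the power-sum/Newton argument then yields the eigenvalue statement. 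The step you flag as the main obstacle---the relative Lefschetz input---is indeed the only nontrivial external ingredient, and your handling of it is the standard one: Bowen's filtration satisfies $f(M_s)\subset\mathrm{int}(M_s)$, so $f^n$ has no fixed points on $\partial M_s$; periodic points are nonwandering, and $\Omega_s$ is the maximal invariant set in $M_s\setminus M_{s-1}$, so $\mathrm{Fix}(f^n)\cap(M_s\setminus M_{s-1})=\Period(\Omega_s,f|_{\Omega_s},n)$. One small remark: the constancy of $q=\mathrm{rank}(E^u|_{\Omega_s})$ over the basic set, which you use implicitly, follows because $E^u$ is a continuous subbundle and $\Omega_s$ carries a dense orbit.
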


\begin{corollary} 
Suppose $(M,f)$ is an Axiom A diffeomorphism, $(\Omega, f|_{\Omega})$ is the nonwandering set of $(M,f)$, $E^u|_{\Omega}$ is orientable, and $\Delta: \Omega \rightarrow \{ -1, 1\}$ is defined as in Example \ref{orientExSign}. Let $q: \Omega \rightarrow \{0, 1\}$ be the function defined by $q(x) = {\rm rank}(E^u_x) \hbox{ mod }2$. Then
\begin{enumerate}
\item if $q \equiv 0$, then $\zeta_{{\rm hom}}(z) = \zeta_{(\Omega, \Delta)}(z)$;
\item if $q \equiv 1$, then $\zeta_{{\rm hom}}(z) = 1/\zeta_{(\Omega, \Delta)}(z)$.
\end{enumerate}
\end{corollary}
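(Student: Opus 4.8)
The plan is to reduce the corollary to the pointwise sign identity of Example \ref{ClaLefEx}, applied to the iterates $f^n$ rather than to $f$ itself, and then to compare the two exponential generating functions coefficient by coefficient.

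First I would recall how $\zeta_{{\rm hom}}$ encodes Lefschetz numbers. Since $(M,f)$ is Axiom A, every point of period $n$ lies in $\Omega$ and is a hyperbolic, hence nondegenerate, fixed point of $f^n$, so the fixed point index of $f^n$ at $x \in \Period(\Omega, f|_{\Omega}, n)$ equals $\sig(\det(I - Df^n(x) : T_xM \to T_xM))$. Thus, writing $\tilde N_n$ for the signed count appearing in $\zeta_{{\rm hom}}$ (as in the Introduction and Example \ref{ClaLefEx}), we have
\[
\tilde N_n \;=\; \sum_{x \in \Period(\Omega,\, f|_{\Omega},\, n)} \sig\big(\det(I - Df^n(x))\big).
\]

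The key step is the orientation computation of Example \ref{ClaLefEx} carried out with $f^n$ in place of $f$. The hyperbolic splitting $T_\Omega M = E^s \oplus E^u$ is $f$-invariant, so $E^u$ is also the unstable bundle of $f^n$, with the same rank $q(x) = {\rm rank}(E^u_x)$, and $D_x f^n|_{E^u_x} = D_{f^{n-1}(x)}f \circ \cdots \circ D_x f$ preserves (resp. reverses) the orientation of $E^u_x$ precisely when $\Delta^{(n)}(x) = \prod_{i=0}^{n-1}\Delta(f^i(x))$ equals $1$ (resp. $-1$). Factoring $\det(I - Df^n(x))$ along $E^s_x$ and $E^u_x$: the stable factor is positive, since the eigenvalues of $Df^n(x)|_{E^s_x}$ lie inside the unit disk and hence those of $I - Df^n(x)|_{E^s_x}$ have positive real part; and the unstable factor has sign $(-1)^{q(x)}\sig(\det(D_xf^n|_{E^u_x}))$ times a positive number (write $\prod_i(1-\mu_i) = (-1)^{q(x)}(\prod_i\mu_i)\prod_i(1-\mu_i^{-1})$ with $|\mu_i|>1$). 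Hence $\sig(\det(I - Df^n(x))) = (-1)^{q(x)}\Delta^{(n)}(x)$, exactly as in \cite[Proposition 5.7]{FraBook} and \cite[Section I.4]{Sma}, noting $(-1)^{q(x)} = (-1)^{{\rm rank}(E^u_x)}$.

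Finally I would sum over periodic points and pass to generating functions. Under the hypothesis $q \equiv 0$ (resp. $q\equiv 1$) the factor $(-1)^{q(x)}$ is the constant $+1$ (resp. $-1$), so for every $n$ one gets $\tilde N_n = N_n(\Omega, f|_{\Omega}, \Delta)$ (resp. $\tilde N_n = -N_n(\Omega, f|_{\Omega}, \Delta)$), with $N_n$ as in Definition \ref{signedDynZeta}. Substituting into $\zeta_{{\rm hom}}(z) = \exp\big(\sum_{n\ge1}\tfrac{\tilde N_n}{n}z^n\big)$ yields $\zeta_{{\rm hom}}(z) = \zeta_{(\Omega,\Delta)}(z)$ in the first case and $\zeta_{{\rm hom}}(z) = \exp\big(-\sum_{n\ge1}\tfrac{N_n(\Omega, f|_{\Omega},\Delta)}{n}z^n\big) = 1/\zeta_{(\Omega,\Delta)}(z)$ in the second. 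The only point requiring care is the middle step: one must note that $q$ is locally constant on $\Omega$ (it is constant on each basic set, so the hypotheses $q\equiv 0$ and $q\equiv 1$ are exactly the cases this handles) and that the positivity of the stable determinant factor holds for every $n$; both follow from hyperbolicity just as in the $n=1$ case already treated in Example \ref{ClaLefEx}, so I do not anticipate a genuine obstacle here.
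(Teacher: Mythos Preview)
Your proposal is correct and follows essentially the same route as the paper: both reduce the statement to the pointwise identity $L(x) = (-1)^{q(x)}\Delta^{(n)}(x)$ for periodic points of $f^n$ (the paper cites this from \cite{FraBook,Sma} via Example \ref{ClaLefEx}, while you spell out the stable/unstable determinant factorization), then sum and compare the two generating functions term by term. The paper's proof is terser but the underlying argument is the same.
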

\begin{proof}
By definition, the signed zeta function is given by 
\[ 
\zeta_{\varphi}(z) = \exp \left( \sum_{n=1}^{\infty} \frac{N_n(X, \varphi, \Delta)}{n} z^n \right) 
\]
where $N_n(X, \varphi, \Delta) = \sum_{x \in \Period(X, \varphi, n)} \Delta^{(n)}(x)$.
If $q\equiv 0$, then 
\[ \sum_{x\in \Period(\Omega, f|_{\Omega}, n)} L(x) = \sum_{x \in \Period(\Omega, f|_{\Omega}, n)} \Delta^{(n)}(x)
\]
while if $q \equiv 1$, then
\[
\sum_{x\in \Period(\Omega, f|_{\Omega}, n)} L(x) = (-1) \sum_{x \in \Period(\Omega, f|_{\Omega}, n)} \Delta^{(n)}(x).
\]
The result then follows.
\end{proof}

\section{Examples}
To conclude the paper, two examples are discussed. These examples point to the possibility of a stronger relationship between the signed version of Putnam's homology and the standard homology of the manifold associated with the Axiom A diffeomorphism. However, such a relationship is (at this point) highly speculative. 
\begin{example}{\bf Shifts of finite type} \\
In \cite{BowFra}, Bowen and Franks prove the following results:
\begin{theorem} (reformulation of \cite[Theorem 3.2]{BowFra})
Suppose $(M,f)$ is an Axiom A diffeomorphism satisfying the no-cycle condition, ${\rm dim}(\Omega_s)=0$, and $q:={\rm rank}(E^u|_{\Omega_s})$. Then there exists signed shift of finite type $(\Sigma_G, \sigma, \Delta_G)$ such that
\begin{enumerate}
\item $(\Sigma_G, \sigma)$ is conjugate to $(\Omega_s, f|_{\Omega_s})$;
\item the maps $\gamma^s_{G, \Delta_G}: \Z G^0 \rightarrow \Z G^0$ and  $f|_{M_s}: H_q(M_s, M_{s-1}) \rightarrow H_q(M_s, M_{s-1})$ are shift equivalent;
\item the maps $\gamma^s_{G, \Delta_G}: D^s_{\Delta_G}(G) \rightarrow D^s_{\Delta_G}(G)$ and $f|_{M_s}: H_q(M_s, M_{s-1}) \rightarrow H_q(M_s, M_{s-1})$ are shift equivalent
\end{enumerate}
where $(M_s)_{s=1}^m$ is a fixed filtration associated to the basic sets, $(\Omega_s)_{s=1}^m$, of $(M,f)$.
\end{theorem}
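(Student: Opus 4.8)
The statement is, as its parenthetical makes clear, \cite[Theorem 3.2]{BowFra} rewritten in the language of signed shifts of finite type developed above, so the plan is not to reprove it but to set up the dictionary with care and then extract item (3) formally from item (2).

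First I would recall the construction behind \cite[Theorem 3.2]{BowFra}. Since $\dim(\Omega_s)=0$, a Markov partition of $\Omega_s$ into finitely many nonempty clopen rectangles presents $(\Omega_s, f|_{\Omega_s})$ as a shift of finite type; this is item (1). Moreover $\Omega_s$ is then a totally disconnected compact metric space, so every vector bundle over it is trivial and in particular $E^u|_{\Omega_s}$ is orientable; fixing an orientation, the recipe of Example \ref{orientExSign} produces a continuous $\Delta:\Omega_s\to\{-1,1\}$. Transporting $\Delta$ along the conjugacy and passing to a higher block presentation as in Proposition \ref{graphPresentSigned} (so that the sign depends only on a single edge) turns this into a graph $G$ with a function $\Delta_G:G^1\to\{-1,1\}$, and in this form the signed structure matrix of $\Omega_s$ used in \cite{BowFra} becomes, up to the indexing conventions relating structure matrices to the map of Definition \ref{gammaRegSignDimGro}, precisely $\gamma^s_{G,\Delta_G}:\Z G^0\to\Z G^0$. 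The step requiring genuine care is that the moves involved here --- higher block presentations and, more generally, changes of Markov partition --- replace the signed structure matrix by a signed strong shift equivalent, hence shift equivalent, one, so that its shift equivalence class is unchanged; these are exactly the manipulations whose compatibility with the sign functions is already built into Definitions \ref{deltaDefGraph} and \ref{gammaRegSignDimGro} and into the proof of Proposition \ref{graphPresentSigned}.

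With the dictionary in hand, \cite[Theorem 3.2]{BowFra} says the signed structure matrix is shift equivalent (over $\Z$) to $f_*:H_q(M_s,M_{s-1})\to H_q(M_s,M_{s-1})$, which in this filtered, zero-dimensional setting is an automorphism of a finitely generated abelian group (see \cite{BowFra,Bow}); together with the previous paragraph this is item (2). Item (3) then follows with no further dynamical input. By the standard characterization of shift equivalence over $\Z$ in terms of the associated dimension pairs (equivalently, $\Z[t,t^{-1}]$-modules), item (2) is equivalent to an isomorphism of pairs
\[
\big(\lim_{\rightarrow}(\Z G^0,\gamma^s_{G,\Delta_G}),\ \widehat{\gamma^s_{G,\Delta_G}}\big)\ \cong\ \big(\lim_{\rightarrow}(H_q(M_s,M_{s-1}),f_*),\ \widehat{f_*}\big),
\]
where each pair consists of an abelian group together with a distinguished automorphism. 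By Definition \ref{gammaRegSignDimGro} the left-hand pair is exactly $\big(D^s_{\Delta_G}(G),\gamma^s_{G,\Delta_G}\big)$, and since $f_*$ is already invertible the right-hand pair is simply $\big(H_q(M_s,M_{s-1}),f_*\big)$. An isomorphism of these two pairs is the same thing as a conjugacy from $\gamma^s_{G,\Delta_G}$ on $D^s_{\Delta_G}(G)$ to $f_*$ on $H_q(M_s,M_{s-1})$, and a conjugacy is in particular a shift equivalence; this is item (3).

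The main obstacle, then, is not conceptual but organizational: carrying out the reduction to a presentation in the precise sense of Definition \ref{deltaDefGraph} compatibly with the sign functions, so that the resulting $\gamma^s_{G,\Delta_G}$ genuinely lies in the shift equivalence class to which \cite[Theorem 3.2]{BowFra} applies. Granting that, items (1) and (2) are a transcription of \cite{BowFra} and item (3) is the formal dimension-pair argument above.
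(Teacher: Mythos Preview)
The paper does not actually supply a proof of this theorem: it is presented purely as a reformulation of \cite[Theorem~3.2]{BowFra}, followed only by the remark that the Bowen--Franks result implies (but also uses) \cite[Theorem~2]{Bow}. So there is nothing to compare against beyond the analogous, very brief argument the paper gives for Theorem~\ref{BowResult}.

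Your sketch is correct and is exactly the shift-equivalence analogue of that argument. Items (1) and (2) are, as you say, a transcription of \cite{BowFra} once one checks that passing to a higher block presentation (to make the sign depend on a single edge, as in Proposition~\ref{graphPresentSigned} and Definition~\ref{deltaDefGraph}) replaces the signed structure matrix by a strongly shift equivalent one; this is the organizational point you flag, and it is the right thing to flag. For item (3), your dimension-pair argument is the appropriate strengthening of the ``same nonzero eigenvalues'' step in the proof of Theorem~\ref{BowResult}: shift equivalence of $\gamma^s_{G,\Delta_G}$ on $\Z G^0$ with $f_*$ yields an isomorphism of the associated dimension pairs, and the left-hand pair is $(D^s_{\Delta_G}(G),\gamma^s_{G,\Delta_G})$ by Definition~\ref{gammaRegSignDimGro}. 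The one substantive input you invoke is that $f_*$ on $H_q(M_s,M_{s-1})$ is already an automorphism, so that the right-hand dimension pair is literally $(H_q(M_s,M_{s-1}),f_*)$; this is indeed established in \cite{Bow,BowFra} for the zero-dimensional basic set case, and without it the passage from (2) to (3) would fail (an endomorphism is not in general shift equivalent to its induced automorphism on the direct limit). With that in hand, the resulting conjugacy is a lag-zero shift equivalence, giving (3).
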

The reader might notice that Bowen's and Franks' result implies \cite[Theorem 2]{Bow} (stated as Theorem \ref{BowResult} above). However, the proof in \cite{BowFra} uses \cite[Theorem 2]{Bow}.
\end{example}

\begin{example} {\bf Two dimensional hyperbolic toral automorphisms} \label{twoDimTorAut} \\
We give an example in which one can compute the standard homology, Putnam's homology and the relevant actions explicitly. Let 
\[
\varphi=A=\left( \begin{array}{cc} 1 & 1 \\ 1 & 0 \end{array} \right)
\]
and consider the induced action on the two-torus, $\R^2/\Z^2$. In this example, $\Omega$ is the entire manifold and $\Delta$ is the constant function one and $q=1$.

In regards to the standard homology, we have the following
$$H_N(\R^2/\Z^2;\field{R}) \cong \left\{ \begin{array}{ccc}\field{R} & : &  N=0, 2 \\ \field{R}^2 & : & N=1 \\ 0 & :  & \hbox{otherwise} \end{array} \right. $$
and the action is given by the identity on $H_0(\R^2/\Z^2;\field{R})$, $A$ on $H_1(\R^2/\Z^2;\field{R})$, and minus the identity on $H_2(\R^2/\Z^2;\field{R})$.
\par
In regards to Putnam's homology (based on \cite[Example 7.4]{Put}) we have the following 
$$H^s_N(\R^2/\Z^2, A)\otimes \field{R} \cong  \left\{ \begin{array}{ccc}\field{R} & : &  N=-1, 1 \\ \field{R}^2 & : &  N=0 \\ 0 & : & \hbox{otherwise} \end{array} \right. $$
and the action of $((\varphi^{-1})^s)\otimes Id_{\field{R}}$ is given by the identity on $H^s_{-1}(\R^2/\Z^2, A)\otimes \field{R}$, $A$ on $H_0(\R^2/\Z^2,A)\otimes \field{R}$, and minus the identity on $H^s_1((\R^2/\Z^2, A)\otimes \field{R}$ .
\par
Thus, in this very special case, there is an even stronger than predicted by Theorem \ref{eigValThm} relationship between the homology of torus and Putnam's homology of the Smale space $(\R^2/\Z^2, A)$. Namely, they are the same with dimension shift of one (this is exactly the rank of bundle $E^u$ in this case). Moreover, the actions induced by $f$ and $\varphi^{-1}$ are also the same (again with dimension shift).
\end{example}
\begin{acknowledgement}
I thank Magnus Goffeng, Ian Putnam and Robert Yuncken for discussions. In addition, I thank Magnus for encouraging me to publish these results. I also thank the referee for a number of useful suggestions.
\end{acknowledgement}
%

%
%

\end{document}